\documentclass{amsart}
\usepackage[utf8]{inputenc}
\usepackage{amsmath}
\usepackage{amsfonts}
\usepackage{bbm}
\usepackage{amsthm}
\usepackage{graphicx}
\usepackage{tikz}
\usepackage{caption}
\usepackage{subfig}
\usepackage[autostyle]{csquotes}
\usepackage{xspace}
\usepackage[
text={440pt,575pt},
headheight=9pt,
centering
]{geometry}
\usepackage{xcolor}
\usepackage{hyperref}
\usepackage{url, hypcap}
\definecolor{darkblue}{RGB}{0,0,160}
\hypersetup{
	colorlinks,
	citecolor=darkblue,
	filecolor=black,
	linkcolor=darkblue,
	urlcolor=darkblue
}
\usepackage{mathtools}

\newtheorem{theorem}{Theorem}[section]
\newtheorem{lemma}[theorem]{Lemma}
\newtheorem{proposition}[theorem]{Proposition}

\theoremstyle{definition}
\newtheorem{example}[theorem]{Example}
\newtheorem{remark}[theorem]{Remark}
\newtheorem{definition}[theorem]{Definition}
\newtheorem{assumption}[theorem]{Assumption}
\newtheorem{observation}[theorem]{Observation}

\numberwithin{equation}{section}

\newcommand{\EE}{\mathbb{E}}
\newcommand{\PP}{\mathbb{P}}
\newcommand{\VV}{\mathbb{V}}

\newcommand{\Mathematica}{\textsc{Mathematica}\xspace}
\newcommand{\sage}{\textsc{Sage}\xspace}

\newcommand{\on}[1]{\operatorname{#1}}
\newcommand{\des}{\operatorname{des}}
\newcommand{\rk}{\operatorname{rk}}
\newcommand{\cox}[1]{\mathtt{#1}}

\allowdisplaybreaks

\begin{document}
\title{A central limit theorem for the two-sided descent statistic on Coxeter groups}
\author[B.~Br\"uck]{Benjamin Br\"uck}
\address[]{Department of Mathematics, ETH Z\"urich \\ Rämistrasse 101, 8092 Z\"urich, Switzerland}
\email{benjamin.brueck@math.ethz.ch}
\urladdr{\url{https://people.math.ethz.ch/~bbrueck/}}
\author[F.~R\"ottger]{Frank R\"ottger}
\address[]{Research Center for Statistics, University of Geneva, Boulevard du Pont d’Arve 40, 1205 Geneva, Switzerland}
\email{frank.roettger@unige.ch}
\urladdr{\url{https://sites.google.com/view/roettger/}}

\begin{abstract}
 We study the asymptotic behaviour of the statistic $(\on{des}+\on{ides})_W$ which assigns to an element $w$ of a finite Coxeter group $W$ the number of descents of $w$ plus the number of descents of $w^{-1}$. Our main result is a central limit theorem for the probability distributions associated to this statistic. This answers a question of Kahle--Stump and builds upon work of Chatterjee--Diaconis, \"Ozdemir and R\"ottger.
 
 \smallskip
 \noindent \textbf{Keywords.} Probabilistic combinatorics, Coxeter statistics, Central limit theorem, Descent statistic
\end{abstract}

\maketitle

\section{Introduction}
Statistical and probabilistic methods in the investigation of combinatorial and algebraic objects are powerful tools and reveal deeply rooted connections between those fields. Of greatest significance in probabilistic asymptotics is the central limit theorem (CLT), that is the convergence in distribution of a sequence of random variables, normalised by its mean and its standard deviation, towards the standard Gaussian.
This paper's main result is an equivalent formulation of the central limit theorem for a sequence of random variables that arises from a statistic on sequences of finite Coxeter groups.

In the symmetric group $ \on{Sym}(n) $, which is the Coxeter group of type $\cox{A}_{n-1}$, the descent statistic is defined as follows: Write the elements of $\on{Sym}(n)$ in one-line notation. Then the number of descents  $\on{des}(\pi)$ of an element $\pi\in \on{Sym}(n)$ is given by the number of positions where an entry is larger than its successor. This concept generalises to arbitrary finite Coxeter groups, the necessary definitions are presented in Section~\ref{s:Descents}.

Fixing such a Coxeter group $W$, choosing an element of $W$ uniformly at random and evaluating the descent statistic gives rise to a random variable $D_W$.
Kahle and Stump recently showed that for sequences $(W_n)_n$ of finite Coxeter groups of growing rank\footnote{The rank of a Coxeter group $W$ is the size of a particular generating set of $W$ (the set of ``simple reflections''). It can be seen as a measure of size or complexity of the group. The rank of the symmetric group is $\rk(\on{Sym}(n)) = n-1$.}, the sequence $D_{W_n}$ satisfies the CLT if and only if its variance tends to infinity, see \cite{KahleStump2018}.
They asked \cite[Problem 6.10]{KahleStump2018} whether for the random variable $T_W$ associated to the statistic $t(w)\coloneqq \on{des}(w)+\on{des}(w^{-1})$, a similar statement holds true.
The statistic $t$ was studied by Chatterjee--Diaconis \cite{chatterjee2017} who were motivated by defining a metric using descents; it also has a geometric interpretation in terms of a two-sided analogue of the Coxeter complex first introduced by Hultman  \cite{Hultman2007} and also studied by Petersen \cite{Pet:twosidedanalogue}, for details see Appendix~\ref{appendixGeometry}.
Our main result is a positive answer to the question of Kahle--Stump under an additional hypothesis on the sequence of Coxeter groups. This hypothesis does not seem to be very restrictive, see the comments below.	
\begin{theorem}\label{t:main_result_intro}
	Let $(W_n)_n$ be a well-behaved sequence of finite Coxeter groups such that $\rk(W_n)\to \infty$ and let $T_n$ be the random variable associated to the statistic $t$ on $W_n$. Then the following are equivalent:
	\begin{enumerate}
		\item $(T_n)_n$ satisfies the CLT;
		\item \label{i:Variancetoinftyintro} $\VV(T_n)\rightarrow \infty$.
	\end{enumerate}
\end{theorem}
Item~\ref{i:Variancetoinftyintro} can equivalently be defined in terms of the irreducible components of $W_n$ (see Theorem~\ref{t:mainresult}). It is in particular satisfied if the maximal size of a dihedral parabolic subgroup in $W_n$ does not grow too fast, e.g.~if it is bounded.
We give the definition of ``well-behaved'' and sufficient conditions in Section~\ref{s:mainresult} but would like to remark that we were not able to construct a sequence of Coxeter groups that does not have this property. In particular, sequences $(W_n)_n$ that satisfy Item~\ref{i:Variancetoinftyintro} are well-behaved if: the number or rank of irreducible factors occurring in any $W_n$ are bounded; or there are no irreducible factors of dihedral type and only boundedly many irreducible components of $W_n$ have rank not in $o(\rk(W_n))$.
An example of a well-behaved sequence with an unbounded number of irreducible components that have rank not in $o(\rk(W_n))$ is given in Example~\ref{ex:CoxetergroupsandCLT} (i).

We would like to point out that since the first publication of this article, Valentin Féray \cite{Feray2020} has shown how to remove the well-behaved condition from Theorem~\ref{t:main_result_intro}.
This is achieved by applying an inequality of Mallows \cite{Mallows72} to our proof, improving on Lemma~\ref{l:infinitesum}. This allows to control better the convergence of the irreducible components that do not have rank in $o(\rk(W_n))$, such that the well-behaved condition is not required.

Special cases of Theorem~\ref{t:main_result_intro} were known before: For the case where $W_n=\on{Sym}(n+1)$, the irreducible Coxeter group of type $ \cox{A}_n $, the result is due to Vatutin \cite{Vatutin} and was later, with different methods, reproven by Chatterjee--Diaconis \cite{chatterjee2017} and \"Ozdemir \cite{oezdemir2019}. Following the approach of Chatterjee and Diaconis, R\"ottger \cite{Roettger2019} generalised this to the cases where $W_n$ is an irreducible Coxeter group of type $ \cox{B}_n $ or $ \cox{D}_n $. Technical difficulties of these proofs lie in the dependencies between $ \on{des}(w) $ and $ \on{des}(w^{-1}) $, which require probabilistic methods as for example interaction graphs, see \cite{chatterjee2008}, to establish the CLT.

In order to extend these results to arbitrary products of irreducible Coxeter groups, we take an approach similar to the one used by Kahle--Stump \cite{KahleStump2018} for the descent statistic; this in particular involves an application of Lindeberg's theorem for triangular arrays. There is however a major difference between their approach and ours: The generating function of the descent statistic is given by the Eulerian polynomial which factors over the reals and has only negative roots, see \cite{Bre:qEulerianpolynomials} and \cite{SV:sEulerianpolynomials}. Kahle and Stump heavily used this in order to deduce their result. In contrast to that, the generating function of the statistic $t$ is the two-sided Eulerian polynomial as studied e.g. in \cite{CRS:Permutationssequencesrepetitions}, \cite{Pet:TwosidedEulerian} and \cite{Vis:Someremarksjoint}. It does not have a such a nice factorisation, even in the setting of symmetric groups. In order to resolve the additional difficulties arising from this, we are led to compute higher moments of the random variables $T_W$. For this, we use and generalise the work of Özdemir \cite{oezdemir2019}.

\subsection*{Structure of article}
The structure of the paper is as follows:
Section~\ref{s:Descents} introduces some basic notations, finite Coxeter groups and the descent statistic.
Section~\ref{s:fourthmoments} explains how to derive recursively higher moments of the descent statistic and the statistic $t$. This is done using conditional expectations and a recursion solver software.
In Section~\ref{s:rank=O(n)}, we give sufficient conditions for establishing the CLT for weighted sums of sequences of random variables which all individually satisfy the CLT.
These enable us in Section~\ref{s:Lindeberg} to apply the Lindeberg Theorem and obtain the asymptotic normality of $T_{W_n}$ for sequences of Coxeter groups $W_n$ whose irreducible components satisfy a certain maximum condition.
Combining these results, Section~\ref{s:mainresult} delivers our main theorem.  
In the appendix we present a discussion of a geometric perspective on the statistic $t$ in the context of the two-sided analogue of the Coxeter complex defined in \cite{Hultman2007}, as well as a table of moments of the statistics $\on{des}$ and $t$ for Coxeter groups of type $\cox{A}$ and $\cox{B}$.

\subsection*{Acknowledgements}
We express special gratitude towards Norbert Gaffke for his help with the technical difficulties in Section~\ref{s:rank=O(n)}. We would like to thank Thomas Kahle and Hauke Seidel for helpful remarks on a first version of this text.
Part of this work was established during a visit of the first-named author at OVGU Magdeburg. He would like to thank the group there for their hospitality. 
Benjamin Br\"uck was supported by the grant BU 1224/2-1 within the Priority Programme 2026 ``Geometry at infinity'' of the Deutsche Forschungsgemeinschaft (DFG). Frank R\"ottger acknowledges support by the Deutsche Forschungsgemeinschaft (DFG)
under grant 314838170, GRK 2297 MathCoRe.

\section{Preliminaries} \label{s:Descents}

\subsection{Central limit theorems and o-notation}
Let $ (X_n)_n $ be a sequence of random variables with distribution functions $ (F_n)_n $.
We say that $ (X_n)_n $ converges in distribution to a random variable $ X $ with distribution function $ F $ (denoted as $ X_n\stackrel{D}{\rightarrow}X $), if for every $ x $ where $ F $ is continuous, we have $ \lim_{n\to \infty}F_n(x)=F(x) $.

We say that a sequence of integrable random variables $ (X_n)_n $ with finite variance \emph{satisfies the central limit theorem (CLT)}, if it holds that 
\[\frac{X_n-\EE(X_n)}{\sqrt{\VV(X_n)}}\stackrel{D}{\rightarrow}N(0,1),\]
which means that $ (X_n)_n $, normalised by its mean and its standard deviation, converges in distribution towards the standard Gaussian. 

The following will become useful for establishing CLTs later on:

\begin{lemma}
	\label{l:subsubsequence}
	Let $(X_n)_n$ be a sequence of integrable random variables with finite variance. Then $(X_n)_n$ satisfies the CLT if and only if every subsequence of $(X_n)_n$ has a subsequence which satisfies the CLT.
\end{lemma}
\begin{proof}
	This follows from the following elementary fact: Let $(a_n)_n$ be a sequence in a topological space $A$ and let $a\in A$. If every subsequence of $(a_n)_n$ has a subsequence which converges to $a$, then $(a_n)_n$ converges to $a$.
	Apply this to the sequence of distribution functions.
\end{proof}

In this paper, we use little-$ o $ and big-$ O $ notation. The definitions vary in the literature, we use the following conventions: Let $f$ and $g$ be maps from $\mathbb{N}_+$ or $\mathbb{R}_{\geq 0}$ to $\mathbb{R}_{\geq 0}$. We say that $ f(n)=o(g(n)) $, if it holds that $ \lim_{n\rightarrow \infty} \frac{f(n)}{g(n)}=0 $. Furthermore, we write $ f(n)=O(g(n)) $, if there is a constant $ C > 0 $ and $N \in \mathbb{N}$ such that for all  $n \geq N$, one has $ f(n)\leq C g(n)$. We say that $ f(n) $ is \emph{of order} $ g(n) $, if $ \lim_{n\rightarrow \infty} \frac{f(n)}{g(n)}=c $ where $ c $ is a positive constant.

\subsection{Coxeter groups}
We start with recalling some background about Coxeter groups. For further details, we refer the reader to \cite{BB:CombinatoricsCoxetergroups}.

Let $S$ be a set. A matrix $m:S\times S \to \mathbb{N}\cup \{\infty\}$ is called a \emph{Coxeter matrix}, if for all $(s,s')\in S\times S$, the following holds true:
\begin{align*}
	m(s,s')=m(s',s)\geq 1,\\
	m(s,s')=1 \Leftrightarrow s=s'.
\end{align*}
A group $W$ is called a \emph{Coxeter group}, if there is a set $S\subseteq W$ and a Coxeter matrix $m:S\times S \to \mathbb{N}\cup \{\infty\}$ such that a presentation of $W$ is given by
\begin{align*}
	W=\left\langle S \,\middle|\, (ss')^{m(s,s')}=1 \text{ for all } (s,s')\in S\times S\right\rangle.
\end{align*}
In this setting, the pair $(W,S)$ is called a \emph{Coxeter system} and $S$ the set of \emph{simple reflections}. The size of $S$ is called the \emph{rank of $(W,S)$}, abbreviated by $\rk(W)$. In what follows, when we talk about a Coxeter group $W$, we tacitly assume that it comes with a fixed set generating set $S$ which make $(W,S)$ a Coxeter system. Also, if we write $W$ as a product of Coxeter groups $W= W_1 \times W_2 \times \cdots \times W_n$, we assume that $S= S_1 \cup S_2 \cup \ldots \cup S_n$, where $S_i$ is the set of simple reflections of $W_i$.

A Coxeter group $W$ is called \emph{irreducible} if it cannot be written as a non-trivial product of Coxeter groups $W=W_1\times W_2$.
By the classification of finite reflection groups, every \emph{finite} irreducible Coxeter group falls into one of the four infinite families $\cox{A}_n$, $\cox{B}_n$, $\cox{D}_n$, $\cox{I}_2(m)$ or is isomorphic to one of seven finite reflection groups of exceptional type. For combinatorial descriptions of the groups of type $\cox{A}_n$, $\cox{B}_n$, $\cox{D}_n$, see \cite[Chapter 8]{BB:CombinatoricsCoxetergroups}.
A Coxeter group $W$ is said to be a \emph{dihedral group} or \emph{of dihedral type} if $\rk(W)=2$; if $W$ is irreducible, this is equivalent to saying that it is of type $\cox{I}_2(m)$ for some $m\geq 3$. 
Any finite Coxeter group $W$ can be written as a product
\begin{align*}
	W= W_1 \times W_2 \times \cdots \times W_k,
\end{align*}
where each $W_i$ is an irreducible Coxeter group. This decomposition is unique up to permutation of the factors and we call the $W_i$ the \emph{irreducible components of $W$}.

\begin{example}
	\label{ex_symmetric_group}
	Let $W = \on{Sym}(n)$ be the symmetric group on an $n$ element set and let $S$ be the set of pairwise adjacent transpositions $\{(i,i+1)|1 \leq i \leq n-1 \}$. Then $(W,S)$ is a Coxeter system of rank $|S| = n-1$. This gives $\on{Sym}(n)$ the structure of the irreducible Coxeter group of type $\cox{A}_{n-1}$.
\end{example}

\subsection{Coxeter statistics}
In this subsection, we fix a finite Coxeter group $W$ with a set $S$ of simple reflections.
Given an element $w\in W$, the \emph{descent set of $w$} is defined by
\begin{align*}
	\on{Des}(w)\coloneqq \left\lbrace s\in S \, \middle| \, l_S(ws)<l_S(w) \right\rbrace,
\end{align*}
where $l_S(w)$ is the \emph{length of $w$ with respect to $S$}, i.e. the smallest number $n$ such that $w=s_1 s_2 \cdots s_n$, where $s_i\in S$ for all $i$.
The \emph{number of descents} gives rise to a statistic $\on{des}:W  \to \mathbb{N}$ on $W$ defined by $\des(w)\coloneqq |\on{Des}(w)|$.
Choosing an element of $W$ uniformly at random and evaluating this statistic yields a random variable $D$ on $\mathbb{N}$.

\begin{example}
	Similar to Example~\ref{ex_symmetric_group}, let $W$ be the symmetric group $\on{Sym}(3)$ and $S$ its set of pairwise adjacent transpositions $S = \{s_1 = (12), s_2 = (23)\}$. Let $w\in \on{Sym}(3)$ be the 3-cycle $(123)$. Then $w$ can be written in terms of the simple reflections as $w = s_2 s_1$. We have $l_S(w) = 2$, $\on{Des}(w) = \{ s_1 \}$ and $\des(w) = |\on{Des}(w)| = 1$.
\end{example}

The aim of this article is to study the behaviour of the statistic $t$ defined by
\begin{align*}
	t: W & \to \mathbb{N}\\ 
	w &\mapsto  \des(w)+ \des(w^{-1}).
\end{align*}
Just like $\des$, when we choose an element of $ W $ uniformly, the statistic $ t $ gives rise to a random variable on $\mathbb{N}$ which we denote by $T$. 

We also write $\des_W$, $D_W$, $t_W$ or $T_W$ if we want to emphasise the ambient Coxeter group corresponding to these statistics and random variables.

\begin{lemma}
	\label{l:sumofindependent}
	Assume that $W$ decomposes as a product $W_1\times W_2$ of Coxeter groups $W_1$ and $W_2$. Then $T_W$ can be written as a sum of  independent random variables $T_W= T_{W_1}+ T_{W_2}$.
\end{lemma}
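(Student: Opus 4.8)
The plan is to show that the statistic $t$ is \emph{additive} over the product decomposition and then invoke the fact that a uniformly random element of a product of finite groups is a pair of independent uniformly random elements.

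First I would recall the classical fact that the length function is additive over direct products: if $W = W_1 \times W_2$ with simple generators $S = S_1 \cup S_2$ (as in our conventions), then for $w = (w_1, w_2)$ one has $l_S(w) = l_{S_1}(w_1) + l_{S_2}(w_2)$; see \cite[Chapter~2]{BB:CombinatoricsCoxetergroups}. The point is that generators from $S_1$ commute with those from $S_2$, so any word for $w$ can be sorted into an $S_1$-part and an $S_2$-part, and minimality decouples. From this, multiplying $w$ on the right by $s \in S_1$ only affects the $W_1$-coordinate, so $s \in \on{Des}(w) \iff s \in \on{Des}(w_1)$, and symmetrically for $s \in S_2$. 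Hence $\on{Des}(w) = \on{Des}(w_1) \sqcup \on{Des}(w_2)$ and in particular $\des_W(w) = \des_{W_1}(w_1) + \des_{W_2}(w_2)$.

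Second, since $w^{-1} = (w_1^{-1}, w_2^{-1})$, applying the same identity to $w^{-1}$ and adding the two equalities gives the pointwise identity $t_W(w) = t_{W_1}(w_1) + t_{W_2}(w_2)$ for every $w = (w_1, w_2) \in W_1 \times W_2$. Finally, because $|W| = |W_1|\,|W_2|$, a uniform random $w \in W$ corresponds to a pair $(w_1, w_2)$ with $w_1$ and $w_2$ independent and uniform in $W_1$ and $W_2$; thus $w \mapsto t_{W_1}(w_1)$ and $w \mapsto t_{W_2}(w_2)$ are independent and have the distributions of $T_{W_1}$ and $T_{W_2}$ respectively, so the pointwise identity becomes $T_W = T_{W_1} + T_{W_2}$ as a sum of independent random variables.

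There is no real obstacle here: the only input that is not pure bookkeeping is the additivity of $l_S$ over products, which is standard, and after that the statement follows formally from the product structure together with the behaviour of the uniform measure on a product of finite sets.
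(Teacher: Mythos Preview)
Your proof is correct and follows essentially the same approach as the paper's: both establish additivity of $l_S$ over the product, deduce additivity of $\des$ and hence of $t$, and then use that the uniform measure on $W_1\times W_2$ is the product of the uniform measures. Your version is slightly more explicit in spelling out why $\on{Des}(w)=\on{Des}(w_1)\sqcup\on{Des}(w_2)$, but there is no substantive difference.
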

\begin{proof}
	Let $S_1$ and $S_2$ be the set of simple reflections of $W_1$ and $W_2$, respectively. By assumption, we have $S=S_1\cup S_2$.
	Every $w\in W$ can be uniquely written as a product $w=w_1 w_2= w_2 w_1$, where $w_i\in W_i$ and one has $l_S(w)=l_{S_1}(w_1)+l_{S_2}(w_2)$. Consequently, $\des_W(w)=\des_{W_1}(w_1)+\des_{W_2}(w_2)$ and $t_W(w)=t_{W_1}(w_1)+t_{W_2}(w_2)$. The claim now follows because choosing an element of $W$ uniformly at random is equivalent to choosing uniformly at random $w_1$ from $W_1$ and independently $w_2$ from $W_2$.
\end{proof}

\begin{theorem}
	\label{t:varianceofT}
	Let $W$ be a finite Coxeter group and $T$ as above.
	\begin{enumerate}
		\item $\EE(T)=\rk(W)$.
		\item If $W$ is a product of dihedral groups,  $W=\prod_{i=1}^{k} \cox{I}_2(m_i)$, then $\VV(T)=\sum_{i=1}^{k} \frac{4}{m_i}$.
		\item If $W_n$ is a sequence of finite Coxeter groups such that for all $n$, every irreducible component of $W_n$ is of non-dihedral type, then $\VV(T_{W_n})$ is of order $\rk(W_n)$.
	\end{enumerate}
\end{theorem}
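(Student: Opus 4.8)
The plan is to exploit the additivity from Lemma~\ref{l:sumofindependent}: since $T_W = \sum_i T_{W_i}$ over irreducible components $W_i$, both $\EE$ and $\VV$ split as sums over components, so all three parts reduce to understanding the irreducible case. For part (1), I would first record that $\EE(T) = \EE(\des) + \EE(\des(\cdot^{-1})) = 2\EE(D)$, using that inversion is a measure-preserving bijection of $W$. Then $\EE(D) = \tfrac12\rk(W)$ is a standard fact: pairing $w$ with $w_0 w$ (multiplication by the longest element) swaps $\on{Des}(w)$ with its complement in $S$, so $\des(w) + \des(w_0 w) = \rk(W)$, and averaging gives $\EE(D) = \rk(W)/2$. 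Hence $\EE(T) = \rk(W)$. (By additivity this immediately extends from the irreducible case to all finite $W$, and one should note it even holds componentwise.)

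For part (2), by additivity it suffices to compute $\VV(T_{\cox{I}_2(m)})$ for a single dihedral group of order $2m$. Here I would just enumerate: the elements of $\cox{I}_2(m)$ of length $0$ and $m$ have, respectively, $0$ and $2$ descents, while all $2m-2$ elements of length strictly between $0$ and $m$ have exactly $1$ descent, and the same description applies to inverses. One checks that the identity and $w_0$ are the only elements with $t = 0$ and $t = 4$ respectively, that $t = 2$ on the remaining $2m-2$ elements, so $T_{\cox{I}_2(m)}$ takes values $0, 2, 4$ with probabilities $\tfrac1{2m}, \tfrac{2m-2}{2m}, \tfrac1{2m}$. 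Then $\EE(T) = 2$ (consistent with part (1)) and a direct computation gives $\VV(T_{\cox{I}_2(m)}) = 4\cdot\tfrac1{2m} + 0 + 4\cdot\tfrac1{2m} - 0 = \tfrac{4}{2m}\cdot 2 \cdot \tfrac12 = \tfrac1m$; summing over the $k$ factors yields $\sum_i 1/m_i$.

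For part (3), additivity reduces the claim to a two-sided bound: there should be constants $0 < c \le C$ (independent of $n$) with $c\,\rk(W_i) \le \VV(T_{W_i}) \le C\,\rk(W_i)$ for every non-dihedral irreducible component $W_i$, after which summing over components and using $\rk(W_n) = \sum_i \rk(W_i)$ gives $\VV(T_{W_n}) = \Theta(\rk(W_n))$. Since there are only finitely many exceptional types and a fixed bound on their rank, these contribute $O(1)$ to both variance and rank and can be ignored; so the real content is a uniform two-sided linear bound on $\VV(T_{\cox{A}_n})$, $\VV(T_{\cox{B}_n})$, $\VV(T_{\cox{D}_n})$. The upper bound $\VV(T) \le \EE(T^2)$ combined with $\EE(T^2) = O(\rk(W)^2)$ is not enough, so instead I would obtain exact or asymptotic formulas for $\VV(T_{\cox{A}_n})$ and the other two families. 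The second moment $\EE(T^2) = \EE(\des^2) + 2\EE(\des(w)\des(w^{-1})) + \EE(\des(w^{-1})^2) = 2\EE(D^2) + 2\EE(\des(w)\des(w^{-1}))$; the moments of $D$ are classical (e.g.\ via the Eulerian polynomial, or via the local decomposition of $\des$ as a sum of indicator variables over $S$), and the cross term $\EE(\des(w)\des(w^{-1}))$ is exactly what the machinery of \cref{s:fourthmoments}, following Özdemir, is designed to compute. I would therefore cite that the relevant moment formulas give $\VV(T_{\cox{X}_n}) = a_{\cox{X}} n + O(1)$ with $a_{\cox{X}} > 0$ for $\cox{X} \in \{\cox{A}, \cox{B}, \cox{D}\}$, which establishes both the lower and upper linear bounds simultaneously.

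The main obstacle is part (3): proving that the variance is bounded \emph{below} by a positive multiple of the rank requires genuinely controlling the covariance $\on{Cov}(\des(w), \des(w^{-1}))$ — a priori the strong negative dependence between $\des(w)$ and $\des(w^{-1})$ (each averages $\rk(W)/2$, and together they are constrained because $w$ and $w^{-1}$ share a lot of combinatorial structure) could conceivably cancel the linear-order fluctuations of the two summands and leave only lower-order variance. Ruling this out is precisely where the explicit higher-moment computations of \cref{s:fourthmoments} enter, and where the non-dihedral hypothesis is used (for dihedral groups the variance is bounded, as part (2) shows). The bookkeeping to pass from exact formulas in types $\cox{A}, \cox{B}, \cox{D}$ to the uniform constants $c, C$ is then routine.
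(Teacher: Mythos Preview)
Your approach is considerably more elaborate than the paper's, which consists of a single sentence: cite \cite[Corollary~5.2]{KahleStump2018}, where the variance of $T$ is computed for every irreducible finite Coxeter group, and then invoke \cref{l:sumofindependent} and additivity of variance. All three parts follow at once from the Kahle--Stump tables plus additivity, with no separate argument. Your self-contained arguments for parts (1) and (2) are correct in method and have the advantage of not depending on an external reference; your plan for part (3) --- reducing to uniform two-sided linear bounds on $\VV(T)$ for types $\cox{A}_n$, $\cox{B}_n$, $\cox{D}_n$ --- is exactly what the Kahle--Stump formulas provide directly (e.g.\ $\VV(T_{\cox{A}_n}) = \tfrac{n+2}{6} + \tfrac{n}{n+1}$, $\VV(T_{\cox{B}_n}) = \tfrac{n+4}{6}$), so there is no need to invoke the recursive machinery of \cref{s:fourthmoments}.

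Two small corrections. In part (2) your enumeration of $T$ on $\cox{I}_2(m)$ is right (values $0,2,4$ with probabilities $\tfrac{1}{2m}, \tfrac{2m-2}{2m}, \tfrac{1}{2m}$), but the variance is $4\cdot\tfrac{1}{2m} + 0 + 4\cdot\tfrac{1}{2m} = \tfrac{4}{m}$, not $\tfrac{1}{m}$; indeed the paper itself uses $\VV(T_{\cox{I}_2(m)}) = 4/m$ in the proof of \cref{l:CLTforDihedral}, so the constant in the theorem statement is a typo. And your worry about ``strong negative dependence'' between $\des(w)$ and $\des(w^{-1})$ is misplaced: the covariance is in fact positive in the classical types (for instance $\on{Cov}(D_n,D_n') = \tfrac{n}{2(n+1)}$ in type $\cox{A}$), so the lower bound on $\VV(T)$ is not the delicate point you suggest.
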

\begin{proof}
	Kahle--Stump computed the variance of $T$ for all types of finite irreducible Coxeter groups in \cite[Corollary 5.2]{KahleStump2018}. Using Lemma~\ref{l:sumofindependent} and additivity of the variance, the result follows immediately.
\end{proof}

\section{Fourth moments of T} \label{s:fourthmoments}
As defined in Section~\ref{s:Descents}, let $D_W$ be the random variable associated to the statistic $\des_{W}$ and let $T_W$ be the random variable associated to the statistic $t_{W}$ for a finite Coxeter group $W$.
The aim of this section is to prove the following theorem:

\begin{theorem}\label{t:fourthmoments}
	Let $W$ be an irreducible Coxeter group of type $ \cox{A}_n,\cox{B}_n $ or $ \cox{D}_n $. Then the fourth central moment $\EE((T_W-\EE(T_W))^4)$ of $ T_W $ is of order $ n^2 $. 
\end{theorem}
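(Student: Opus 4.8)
The plan is to prove the two matching bounds $\EE\big((T_W-\EE(T_W))^4\big)=\Omega(n^2)$ and $\EE\big((T_W-\EE(T_W))^4\big)=O(n^2)$. The lower bound comes for free: by Jensen's inequality (equivalently Cauchy--Schwarz) one has $\EE\big((T_W-\EE(T_W))^4\big)\ge\VV(T_W)^2$, and $\VV(T_W)$ is of order $n$ for these types by \cref{t:varianceofT}\,(3), so the fourth centred moment is bounded below by a positive multiple of $n^2$. Hence all the work is in the upper bound.

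For the upper bound I would first reduce the two-sided statistic to the ordinary descent statistic. Since $w\mapsto w^{-1}$ is a measure-preserving bijection of $W$, the random variable $w\mapsto\des(w^{-1})$ has the same law as $D_W$; in particular $\EE(T_W)=2\,\EE(D_W)$, and Minkowski's inequality in $L^4$ gives
\[
\big\| T_W-\EE(T_W)\big\|_4\le\big\|\des(w)-\EE(D_W)\big\|_4+\big\|\des(w^{-1})-\EE(D_W)\big\|_4=2\,\big\| D_W-\EE(D_W)\big\|_4 ,
\]
so that $\EE\big((T_W-\EE(T_W))^4\big)\le 16\,\EE\big((D_W-\EE(D_W))^4\big)$. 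It therefore suffices to show that the fourth centred moment of the one-sided statistic $D_W$ is $O(n^2)$.

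Here I would invoke real-rootedness. For $W$ of type $\cox{A}_n$, $\cox{B}_n$ or $\cox{D}_n$ the polynomial $\sum_{w\in W}x^{\des(w)}$ has nonnegative coefficients, constant term $1$, degree $\rk(W)$, and only real roots \cite{Bre:qEulerianpolynomials,SV:sEulerianpolynomials}. It therefore factors into $\rk(W)$ linear forms with positive coefficients, and dividing by its value at $x=1$ exhibits $D_W$ as having the distribution of a sum $\sum_{i=1}^{\rk(W)}\xi_i$ of independent $\{0,1\}$-valued random variables. Expanding $\EE\big(\big(\sum_i(\xi_i-\EE\xi_i)\big)^4\big)$ and using that all cross terms involving an odd power vanish by independence yields the elementary bound
\[
\EE\big((D_W-\EE(D_W))^4\big)\le\VV(D_W)+3\,\VV(D_W)^2 ,
\]
while $\VV(D_W)=\sum_i\VV(\xi_i)\le\rk(W)/4$ is of order $n$. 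Substituting into the previous display gives $\EE\big((T_W-\EE(T_W))^4\big)=O(n^2)$, which together with the lower bound finishes the proof.

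In this approach there is essentially no computational obstacle; the only non-elementary input is real-rootedness of the type-$\cox{D}$ descent polynomial, which is less classical than for types $\cox{A}$ and $\cox{B}$ but is available from \cite{SV:sEulerianpolynomials}. A more hands-on alternative --- closer in spirit to \cref{s:fourthmoments} and to \"Ozdemir's work \cite{oezdemir2019}, and yielding the exact polynomial in $n$ rather than just its order --- would be to build a uniform random $w\in W$ by successively inserting the largest generator (with a random sign in types $\cox{B}$ and $\cox{D}$), to analyse how $\des(w)$ and $\des(w^{-1})$ change under one insertion step, and to turn this into a closed system of linear recursions for the mixed moments of $\des(w)$ and $\des(w^{-1})$ of total degree at most $4$, which a recursion solver can then evaluate explicitly. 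The hard part of that route would be identifying the finite list of auxiliary statistics of $w$ (such as the position of the current largest letter, its sign, and the number of descents to its left) for which the recursion actually closes, and carrying out this bookkeeping uniformly across types $\cox{A}$, $\cox{B}$ and $\cox{D}$.
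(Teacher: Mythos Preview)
Your argument is correct and takes a genuinely different route from the paper. The paper computes the fourth centred moment of $T_W$ exactly for types $\cox{A}$ and $\cox{B}$ by setting up and solving, via \textbf{RSolve}, a closed system of recursions for the mixed moments $\EE(U_n^k{U'_n}^l)$ coming from the two-sided conditional distributions of $(D_n,D'_n)$ (\cref{l:submartingaleA}, \cref{l:submartingaleB}); type $\cox{D}$ is then handled by bounding the difference to type $\cox{B}$. You instead bypass all of this by a two-step reduction: Minkowski in $L^4$ brings the problem down to the one-sided statistic $D_W$, and real-rootedness of the one-sided Eulerian polynomial lets you write $D_W$ as a sum of independent Bernoulli variables, for which the fourth centred moment is trivially $O(\VV(D_W)^2)=O(n^2)$.

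What each approach buys: the paper's computation yields the exact polynomial expressions displayed in \cref{recursionhmomentsA} and \cref{recursionhmomentsB}, and it develops the type-$\cox{B}$ analogue of \"Ozdemir's two-variable recursion (\cref{l:submartingaleB}), which is of independent interest. Your argument is far shorter and uniform across the three types, and it is conceptually noteworthy in light of the paper's introduction: there it is stressed that the \emph{two-sided} Eulerian polynomial is not real-rooted, which is presented as the reason one cannot transplant Kahle--Stump's method directly to $T_W$. Your observation is that for the \emph{fourth moment bound} this obstruction is illusory, since Minkowski lets one work with $D_W$ rather than $T_W$, and for $D_W$ real-rootedness is available. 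The only external input you need beyond what is already cited in the paper is the real-rootedness in type $\cox{D}$, which is indeed covered by \cite{SV:sEulerianpolynomials}.
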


In order to show this, we follow and extend the ideas of Özdemir.
In \cite{oezdemir2019}, he formulated the conditional laws
\begin{align}\label{eq:recursionA}
	\EE(D_{\cox{A}_{n+1}}|D_{\cox{A}_n})=\begin{cases}
		D_{\cox{A}_n} & \text{with probability} ~~ \frac{D_{\cox{A}_n}+1}{n+1},\\
		D_{\cox{A}_n} +1 & \text{with probability} ~~ \frac{n-D_{\cox{A}_n}}{n+1}\\
	\end{cases}
\end{align} 
and  
\begin{align}\label{eq:recursionB}
	\EE(D_{\cox{B}_{n+1}}|D_{\cox{B}_{n}})=\begin{cases}
		D_{\cox{B}_{n}} & \text{with probability} ~~ \frac{2D_{\cox{B}_{n}}+1}{2n+2},\\
		D_{\cox{B}_{n}}+1 & \text{with probability} ~~ \frac{2n-2D_{\cox{B}_{n}}+1}{2n+2}.\\
	\end{cases}
\end{align} 
Here $ \EE(D_{W_{n+1}}|D_{W_{n}}) $ denotes the conditional expected value where $D_{W_{n+1}}$ is generated from $D_{W_{n}}$. For $ \cox{A}_n $, this is done in the one-line notation by inserting $ n+2 $ in a random position in the permutation of length $ n+1 $. For $ \cox{B}_n $, we insert $ n+1 $ multiplied with a binary random variable that assigns equal probability to $ \{\pm 1\} $ in a signed permutation of length $ n $. See \cite{oezdemir2019} for a detailed overview. 
Özdemir used these formulas to compute higher moments of $D_{\cox{A}_n}$ and $D_{\cox{B}_n}$. 
An important tool for his computations is the smoothing theorem (also known as the the law of total expectation) which can be stated as follows:
\begin{theorem}[{Smoothing Theorem; cf. \cite[Theorem 34.4]{billingsley1995probability}}]
	Let $ X $ and $ Y $ be integrable random variables defined on the same probability space. Then, it holds that
	\[\EE(\EE(X|Y))=\EE(X).\]
\end{theorem}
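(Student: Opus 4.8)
The plan is to obtain the identity directly from the defining property of conditional expectation, so that the proof is essentially a one-line specialisation. Recall that for an integrable random variable $X$ and an arbitrary random variable $Y$, the conditional expectation $\EE(X|Y)$ is, up to almost sure equality, the unique $\sigma(Y)$-measurable integrable random variable $Z$ satisfying
\[
\int_A Z \, d\PP = \int_A X \, d\PP \qquad \text{for every } A \in \sigma(Y).
\]
The first step is simply to record this characterisation (as in \cite{billingsley1995probability}). The second step is to apply it with $A$ equal to the whole sample space $\Omega$, which certainly lies in $\sigma(Y)$: this yields $\EE(\EE(X|Y)) = \EE(Z) = \int_\Omega Z \, d\PP = \int_\Omega X \, d\PP = \EE(X)$, which is the assertion.

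For completeness — and because the random variables relevant to this article take values in $\mathbb{N}$ and live on finite probability spaces — I would also indicate the elementary version of the argument, which avoids invoking the abstract construction. Partitioning $\Omega = \bigsqcup_y \{Y=y\}$ according to the value of $Y$ (a finite partition, and we may discard the atoms of probability zero), the conditional expectation is the step function $\EE(X|Y) = \sum_y c_y\,\mathbbm{1}_{\{Y=y\}}$ with $c_y = \EE(X\,\mathbbm{1}_{\{Y=y\}})/\PP(Y=y)$. Taking expectations and using linearity together with $\sum_y \mathbbm{1}_{\{Y=y\}} = 1$ gives
\[
\EE(\EE(X|Y)) = \sum_y c_y\,\PP(Y=y) = \sum_y \EE(X\,\mathbbm{1}_{\{Y=y\}}) = \EE\Bigl(X\sum_y \mathbbm{1}_{\{Y=y\}}\Bigr) = \EE(X).
\]

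I do not expect a genuine obstacle: the statement is, in effect, a reformulation of the definition of $\EE(X|Y)$. The only point that warrants a word of care is the interchange of the summation (or, in the general measure-theoretic setting, of a limit) with the integral, which is legitimate precisely because $X$ is integrable — automatic in the finitely supported case, and otherwise justified by splitting $X$ into its positive and negative parts and applying monotone convergence. Accordingly I would present the short argument via the defining property and simply note that the integrability hypothesis is exactly what makes $\EE(X|Y)$, and hence the identity, well defined.
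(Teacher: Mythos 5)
Your argument is correct: applying the defining property of $\EE(X|Y)$ with $A=\Omega\in\sigma(Y)$ is exactly the standard proof, and your elementary discrete version is also sound. The paper itself gives no proof but simply cites Billingsley, whose argument is the one you reproduce, so there is nothing further to compare.
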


Our approach for proving Theorem~\ref{t:fourthmoments} is to compute inductively higher moments of $T_W$ and $D_W$ for the different families of Coxeter groups separately.
We start in Section~\ref{s:fourthmomentD} by computing the fourth central moment of $D_W$ in the case where $W$ is irreducible and of type $\cox{A}$ or $\cox{B}$. These computations serve as an illustration of the methods we use and the results will be needed for our inductive method of computing the fourth central moments of $T_W$ later on. Building on this, we prove Theorem~\ref{t:fourthmoments} for $W$ of type $\cox{A}$ and $\cox{B}$ in Section~\ref{s:fourthmomentTCoxA} and Section~\ref{s:fourthmomentTCoxB}, respectively.
We finish the proof in Section~\ref{s:prooffourthmoments}.

\subsection{Fourth moment of D}
\label{s:fourthmomentD}
Özdemir showed that the fourth central moment of the random variable $D_{\cox{A}_n }$ is of order $n^2$ \cite[p. 3]{oezdemir2019}. Using the \textbf{RSolve} function of \Mathematica, we are able to give an explicit formula for this moment:

\begin{lemma}\label{l:fmA}
	Let $ D_n $ be the random variable associated to the statistic $\des$ on the Coxeter group $ \cox{A}_n $, $ n\ge 3 $. Then we have:
	\begin{align*}
		\EE((D_n-\EE(D_n))^4)=\frac{1}{240} (n+2) (5 n+8).
	\end{align*}
\end{lemma}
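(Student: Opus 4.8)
The plan is to compute the moments of $D_n = D_{\cox{A}_n}$ exactly by setting up a system of recursions for the first four raw moments $\EE(D_n^k)$, $k = 1, 2, 3, 4$, and solving it in closed form. The starting point is the recursion \eqref{eq:recursionA} for the conditional expectation $\EE(D_{n+1} \mid D_n)$, which is an affine function of $D_n$. To get a recursion for $\EE(D_{n+1}^k)$, I would first need the higher conditional moments $\EE(D_{n+1}^k \mid D_n)$. These come from the explicit description of the transition: conditioned on $D_n = d$, the value $D_{n+1}$ equals $d$ with probability $\frac{d+1}{n+2}$ and $d+1$ with probability $\frac{n+1-d}{n+2}$, so $\EE(D_{n+1}^k \mid D_n) = D_n^k \frac{D_n+1}{n+2} + (D_n+1)^k \frac{n+1-D_n}{n+2}$, which is a polynomial of degree $k+1$ in $D_n$. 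Taking expectations and applying the Smoothing Theorem turns this into a linear recursion expressing $\EE(D_{n+1}^k)$ in terms of $\EE(D_n^{k+1}), \EE(D_n^k), \dots$ — but the degree-$(k+1)$ terms cancel (the coefficient of $D_n^{k+1}$ in $D_n^k(D_n+1) - (D_n+1)^k D_n$ times $\frac{1}{n+2}$ vanishes after combining), so in fact $\EE(D_{n+1}^k)$ depends only on $\EE(D_n^j)$ for $j \le k$. Hence the four recursions close up into a triangular system.

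Concretely, I would proceed moment by moment. The first moment is already handled: from \eqref{eq:recursionA} and smoothing, $\EE(D_{n+1}) = \frac{n+1}{n+2}\EE(D_n) + \frac{n+1}{n+2}$, which (with base case, e.g. $\EE(D_1) = 0$ or a small explicit value) solves to $\EE(D_n) = \frac{n}{2}$ — this is also a special case of \cref{t:varianceofT}(1). Substituting into the $k=2$ recursion gives a first-order linear recursion for $\EE(D_n^2)$ with an explicit polynomial-in-$n$ inhomogeneous term, solvable by the \textbf{RSolve} function of \Mathematica; then similarly for $k=3$ and $k=4$, each time feeding in the already-solved lower moments. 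Once $\EE(D_n^k)$ is known for $k \le 4$, the fourth centred moment is obtained by the binomial expansion $\EE((D_n - \tfrac{n}{2})^4) = \EE(D_n^4) - 4\cdot\tfrac{n}{2}\EE(D_n^3) + 6\cdot\tfrac{n^2}{4}\EE(D_n^2) - 4\cdot\tfrac{n^3}{8}\EE(D_n) + \tfrac{n^4}{16}$, and simplification should collapse this to $\frac{1}{240}(n+2)(5n+8)$. The restriction $n \ge 3$ is presumably just to ensure the closed form (derived via the recursion solver, which may assume enough steps have been taken) is valid; one checks the small cases $n = 3$ (and perhaps $n=4$) directly against $\on{Sym}(4)$.

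The only real subtlety — rather than a genuine obstacle — is bookkeeping: one must be careful that the base cases fed to \textbf{RSolve} are the correct exact values (computed by hand for small $\cox{A}_n$, i.e. small symmetric groups) and that the solver's general solution is specialized correctly, since an off-by-one in the base case silently changes the polynomial. There is also a minor point that \textbf{RSolve} may return a form valid only for $n$ large enough, which is why the statement is phrased for $n \ge 3$; this is harmless and verified by direct computation in the finitely many excluded cases. Everything else is routine polynomial algebra, and the cancellation of the top-degree term (which is what makes the recursion for each moment first-order and solvable in closed form) is the structural fact that makes the whole computation go through cleanly.
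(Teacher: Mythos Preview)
Your proposal is correct and uses essentially the same method as the paper: derive a recursion from the two-value conditional distribution of $D_{n+1}$ given $D_n$, apply the smoothing theorem, and solve with \textbf{RSolve} using a base case computed directly. The only difference is organizational: the paper works directly with the \emph{centred} moments, obtaining a single first-order recursion for $\EE((D_n-\EE(D_n))^4)$ in terms of itself and the already-known variance, whereas you compute the four raw moments first and then assemble the centred one via the binomial expansion; both routes are straightforward and lead to the same formula.
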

\begin{proof}
	From (\ref{eq:recursionA}), we derive the recursion formula
	\begin{align}\label{eq:recursionA2}
		\EE((D_{n+1}-\EE(D_{n+1}))^4|D_n)=\frac{(n-2) (D_{n}-\EE(D_{n}))^4}{n+2}+\frac{(3 n+4) (D_{n}-\EE(D_{n}))^2}{2 (n+2)}+\frac{1}{16}.
	\end{align} 
	By applying $ \EE $ on both sides of \eqref{eq:recursionA2}, the smoothing theorem leads to
	\begin{align*}
		\EE((D_{n+1}-\EE(D_{n+1}))^4)=\frac{( n-2)\EE( (D_{n}-\EE(D_n))^4)}{ n+2}+\frac{(3 n+4) \text{Var}(D_n)}{2 (n+2)}+\frac{1}{16}
	\end{align*}
	and with the formula for the variance found for example in \cite[Corollary 5.2]{KahleStump2018}, we obtain a recursive formula for $ a[n]=\EE((D_{n}-\EE(D_n))^4) $:
	\begin{align*}
		a[n+1]=\frac{(6n+11)}{48}+\frac{( n-2) a[n]}{ n+2},
	\end{align*}
	which was solved by computing the value $ a[3]=\frac{23}{48} $ with \sage and using the \textbf{RSolve} function of \Mathematica.
\end{proof}

Using the same method and \eqref{eq:recursionB}, we can compute the same moment in type $\cox{B}$:

\begin{lemma}\label{l:fmB}
	Let $ D_n $ be the random variable associated to the statistic $\des$ on the Coxeter group $ \cox{B}_n $, $ n\ge 4 $. Then we have:
	\begin{align}\label{eq:fourthmomentdesB}
		\EE((D_n-\EE(D_n))^4)=\frac{1}{240} (n+1) (5 n+3).
	\end{align}
\end{lemma}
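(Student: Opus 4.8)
The plan is to follow the proof of \cref{l:fmA} line for line, now with the type-$\cox{B}$ data. First I would read off from \cref{eq:recursionB} the underlying conditional distribution of $D_{n+1}:=D_{\cox{B}_{n+1}}$ given $D_n:=D_{\cox{B}_n}$: it is supported on the two values $\{D_n,D_n+1\}$, with probabilities that are affine functions of $D_n$ (exactly as the way \cref{eq:recursionB} is written exhibits them). Writing
\[ D_{n+1}-\EE(D_{n+1}) = \bigl(D_n-\EE(D_n)\bigr) + \bigl(\EE(D_n)-\EE(D_{n+1})\bigr) + B, \qquad B\in\{0,1\}, \]
and expanding the fourth power, $\EE\bigl((D_{n+1}-\EE(D_{n+1}))^4\mid D_n\bigr)$ becomes an explicit polynomial in $D_n$. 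Here I would use that $\EE(D_{\cox{B}_n})=n/2$ (obtained by solving the mean recursion implied by \cref{eq:recursionB} from the base value $\EE(D_{\cox{B}_1})=1/2$) together with the fact that the distribution of $D_{\cox{B}_n}$ is symmetric about $n/2$ — the bijection $w\mapsto ww_0$ complements the descent set — so that all its odd centred moments vanish. This makes the odd-degree terms cancel and leaves the type-$\cox{B}$ analogue of \cref{eq:recursionA2}:
\[ \EE\bigl((D_{n+1}-\EE(D_{n+1}))^4\mid D_n\bigr) = p_1(n)\,(D_n-\EE(D_n))^4 + p_2(n)\,(D_n-\EE(D_n))^2 + p_3(n) \]
with $p_1,p_2,p_3$ explicit rational functions of $n$.

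Applying $\EE$ and the smoothing theorem then gives
\[ \EE\bigl((D_{n+1}-\EE(D_{n+1}))^4\bigr) = p_1(n)\,\EE\bigl((D_n-\EE(D_n))^4\bigr) + p_2(n)\,\VV(D_{\cox{B}_n}) + p_3(n), \]
and substituting the closed form of $\VV(D_{\cox{B}_n})$ from \cite[Corollary 5.2]{KahleStump2018} turns this into a first-order linear recursion $a[n+1]=r(n)a[n]+s(n)$ for $a[n]=\EE((D_{\cox{B}_n}-\EE(D_{\cox{B}_n}))^4)$ with $r,s\in\mathbb{Q}(n)$. As in type $\cox{A}$, the coefficient $r(n)$ vanishes at $n=3$, which is why the formula is only claimed for $n\ge 4$. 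I would then compute $a[4]=\tfrac{23}{48}$ by enumerating $\cox{B}_4$ in \sage, feed the recursion and this initial value to the \textbf{RSolve} command of \Mathematica, and read off the solution $\tfrac{1}{240}(n+1)(5n+3)$; alternatively one verifies directly that this candidate satisfies the recursion and matches at $n=4$, yielding a computer-free argument once the recursion is in hand.

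The only genuinely delicate step is the first one: correctly extracting the two-point conditional law from \cref{eq:recursionB} and checking that the odd-degree contributions cancel, so that the conditional fourth moment is again a low-degree polynomial in $D_n-\EE(D_n)$. Everything after that is the same routine linear-recursion solve as in \cref{l:fmA} and involves no new ideas.
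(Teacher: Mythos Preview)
Your proposal is correct and follows essentially the same route as the paper: derive from \cref{eq:recursionB} the conditional recursion \cref{eq:recursionB2}, apply the smoothing theorem and plug in the known variance to obtain the linear recursion $a[n+1]=\frac{6n+5}{48}+\frac{n-3}{n+1}\,a[n]$, then solve it with the initial value $a[4]=\tfrac{23}{48}$ via \textbf{RSolve}. One small refinement: the odd powers of $D_n-\EE(D_n)$ already cancel \emph{algebraically} in the conditional expression (since the two-point law gives $1-2p=-\tfrac{2}{n+1}(D_n-\EE(D_n))$), so you do not actually need the symmetry of the unconditional distribution at that step---but this does not affect the validity of your argument.
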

\begin{proof}
	From (\ref{eq:recursionB}), we derive the recursion formula
	\begin{align*}
		\EE((D_{n+1}-\EE(D_{n+1}))^4|D_n)=\frac{( n-3) (D_{n}-\EE(D_n))^4}{ n+1}+\frac{(3 n+1) (D_{n}-\EE(D_n))^2}{2 (n+1)}+\frac{1}{16}.
	\end{align*} 
	This is the same recursion formula as for type $ A_{n-1} $ in \eqref{eq:recursionA2}, so we obtain a recursive formula for $ a[n] =\EE((D_{n}-\EE(D_n))^4) $:
	\begin{align*}
		a[n+1]=\frac{(6n+5)}{48}+\frac{( n-3) a[n]}{ n+1},
	\end{align*}
	which was also solved by computing the starting value $ a[4]=\frac{23}{48} $ with \sage and using the \textbf{RSolve} function of \Mathematica.
\end{proof}

\subsection{Moments of T for type \texorpdfstring{$\cox{A}_n$}{A}}
\label{s:fourthmomentTCoxA}
Throughout this subsection, let $T_n=T_{\cox{A}_n}$, $D_n=D_{\cox{A}_n}$ and let $D_n'$ be the random variable associated to the statistic
\begin{align*}
	\cox{A}_n &\to \mathbb{N}\\
	w &\mapsto \des(w^{-1}).
\end{align*}
Clearly, we have $T_n=D_n+D'_n$, but $D_n$ and $D_n'$ are not independent.
In order to compute the fourth central moment of $T_n$, we want to determine inductively mixed moments of the form $\EE(D_{n}^k {D'_{n}}^l)$. To compute these moments recursively, we use the following two-dimensional conditional law for $ (D_{n+1},D'_{n+1})$ given $(D_{n},D'_{n}) $ introduced by Özdemir:

\begin{lemma}[see {\cite[p. 18]{oezdemir2019}}]\label{l:submartingaleA}
	In type $\cox{A}_n$, the conditional law of $ (D_{n+1},D'_{n+1})$ given $(D_{n},D'_{n}) $ is 
	 \begin{align*}
	 	\EE((D_{n+1},D'_{n+1})|(D_{n},D'_{n}))&=\begin{cases}
	 		(D_{n},D'_{n}) & \text{with prob.}~ P_1=\frac{(D_{n}+1)(D'_{n}+1)+n+1}{(n+2)^2},\\
	 		(D_{n}+1,D'_{n}) & \text{with prob.}~P_2= \frac{(n+1-D_{n})(D'_{n}+1)-n-1}{(n+2)^2},\\
	 		(D_{n},D'_{n}+1) & \text{with prob.}~ P_3=\frac{(D_{n}+1)(n+1-D'_{n})-n-1}{(n+2)^2},\\
	 		(D_{n}+1,D'_{n}+1) & \text{with prob.}~P_4=\frac{(n+1-D_{n})(n+1-D'_{n})+n+1}{(n+2)^2}.\\
	\end{cases}  
	\end{align*}
\end{lemma}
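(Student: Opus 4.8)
The plan is to realise the passage from $\cox{A}_n$ to $\cox{A}_{n+1}$ as a random insertion. Identify $\cox{A}_n$ with $\on{Sym}(n+1)$ and $\cox{A}_{n+1}$ with $\on{Sym}(n+2)$, let $\pi\in\on{Sym}(n+2)$ be uniform, let $\sigma\in\on{Sym}(n+1)$ be the permutation obtained by deleting the letter $n+2$ from the one-line notation of $\pi$, and let $j\in\{1,\dots,n+2\}$ be the position of $n+2$ in $\pi$. Then $\sigma$ is uniform, $j$ is uniform and independent of $\sigma$, and $\pi$ is recovered from $(\sigma,j)$ by inserting $n+2$ into the $j$-th gap of $\sigma$. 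Conditioning on $(D_n,D'_n)=(\des\sigma,\des(\sigma^{-1}))=(d,d')$ leaves $\sigma$ uniform on the corresponding fibre and $j$ uniform and independent, so the task is to determine the joint law of the pair of increments $\bigl(\des\pi-d,\ \des(\pi^{-1})-d'\bigr)$ after this conditioning; that it is supported on $\{0,1\}^2$ is clear.

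First I would describe the two increments separately. Inserting the largest letter into a gap changes $\des$ by $0$ for exactly the $d+1$ gaps consisting of the final gap of $\sigma$ together with the gap between the two letters of each descent of $\sigma$, and by $1$ otherwise; in particular $\PP(\des\pi=d\mid\sigma)=\frac{d+1}{n+2}$ for \emph{every} such $\sigma$. For the inverse, one checks directly that the one-line notation of $\pi^{-1}$ is obtained from that of $\sigma^{-1}$ by raising every entry $\geq j$ by one and appending the entry $j$; hence $\des(\pi^{-1})$ changes by $1$ precisely when $j\leq\sigma^{-1}(n+1)$, i.e. for the first $\sigma^{-1}(n+1)$ gaps, where $\sigma^{-1}(n+1)$ is the position of the largest letter of $\sigma$. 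Given $\sigma$, each increment is now an explicit function of $j$, so each of the four conditional probabilities equals $\frac{1}{n+2}$ times the number of gaps lying in the appropriate intersection of these two gap sets; for instance the probability that neither statistic increases equals $\frac{1}{n+2}\bigl(1+S(\sigma)\bigr)$, where $S(\sigma)=\#\{k\geq\sigma^{-1}(n+1): \sigma(k)>\sigma(k+1)\}$ is the number of descents of $\sigma$ weakly to the right of its largest letter.

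What remains — and what I expect to be the crux — is to average these counts over the fibre $\{\des\sigma=d,\ \des(\sigma^{-1})=d'\}$: for a single $\sigma$ the probabilities depend on finer data than $(d,d')$, and only the fibre-averages should collapse to rational functions of $(d,d')$. Since the $\des$-marginal is automatic and the four probabilities sum to $1$, everything reduces to the two identities
\[
\EE\bigl[\,\sigma^{-1}(n+1)\ \big|\ \des\sigma=d,\ \des(\sigma^{-1})=d'\,\bigr]=n+1-d'
\]
and
\[
\EE\bigl[\,S(\sigma)\ \big|\ \des\sigma=d,\ \des(\sigma^{-1})=d'\,\bigr]=\frac{(d+1)(d'+1)-1}{n+2},
\]
after which a short linear computation recovers $P_1,\dots,P_4$ and matches them with the four cases of the statement. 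To prove the two identities I would introduce the refined generating polynomial $\sum_{\sigma\in\on{Sym}(n+1)}x^{\des\sigma}y^{\des(\sigma^{-1})}z^{\sigma^{-1}(n+1)}$, together with a companion variable recording $S(\sigma)$, derive the recursion it satisfies under the same insertion map, and extract the identities by differentiating and specialising at $x=y=z=1$; alternatively one could look for a sign-reversing involution on each fibre, or simply invoke the computation of Özdemir \cite[p.~18]{oezdemir2019}, which performs exactly this bookkeeping. Before attempting the general case I would double-check the whole transition by brute force on $\cox{A}_2\to\cox{A}_3$.
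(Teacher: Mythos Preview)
The paper supplies no proof of this lemma; it is taken directly from \"Ozdemir. For the parallel type-$\cox{B}$ statement (\cref{l:submartingaleB}) the paper does give a proof, and it is quite different from your sketch: one starts from the known differential recursion for the two-sided Eulerian polynomial (Visontai in type $\cox{B}$; Carlitz--Roselle--Scoville \cite{CRS:Permutationssequencesrepetitions} in type $\cox{A}$), compares coefficients to obtain a four-term linear recursion for the joint counts, divides by the group order, and reads the four transition probabilities straight off. No combinatorial analysis of the insertion is needed once the polynomial recursion is available.

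Your insertion setup is correct and the reduction to the two fibre-average identities is exactly right --- they are the whole content of the lemma beyond the one-variable Eulerian recursion. What you should recognise, however, is that those identities are not auxiliary lemmas to be cleaned up afterwards but are \emph{equivalent} to the Carlitz--Roselle--Scoville recursion itself: writing $p=\sigma^{-1}(n{+}1)$, the four gap-counts for a fixed $\sigma$ are $1{+}S(\sigma)$, $d{-}S(\sigma)$, $n{+}1{-}p{-}S(\sigma)$ and $p{-}d{+}S(\sigma)$, so the recursion having closed-form coefficients in $(d,d',n)$ is precisely the statement that $\EE[p\mid d,d']$ and $\EE[S\mid d,d']$ have the closed forms you wrote down. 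Your proposal to establish the identities via a further-refined generating polynomial is therefore at least as much work as proving the polynomial recursion from scratch, and your fallback --- citing \"Ozdemir --- is exactly what the paper does. The economical route, and the one the paper adopts in type $\cox{B}$, is simply to quote the known recursion for $\sum_\sigma s^{\des\sigma}t^{\des\sigma^{-1}}$ and translate it into a transition kernel.
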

We remark that in comparison to this, there is a shift of indices in \cite[p. 18]{oezdemir2019} as there, $D_n$ corresponds to the descent statistic on $\on{Sym}(n)=\cox{A}_{n-1}$. Özdemir used this in order to compute the asymptotics of $\EE(\,(D_{n}-\EE(D_n))^2 (D'_{n}-\EE(D'_n))^2\,)$, see \cite[Lemma 5.1]{oezdemir2019}. We obtain his results and generalisations of it in the proof of the following proposition.

\begin{proposition}\label{p:fourthmomentA} 
	In type $\cox{A}_n$, $ n\geq 3 $, the fourth central moment of $T_n$ is given by
	\begin{align*}
		\EE((T_n-\EE(T_n))^4)=\frac{1}{60} \left(5 n^2+79 n+258\right)-\frac{5 n+2}{n(n+1)}.
	\end{align*}
\end{proposition}

\begin{proof}
	Define $U_n\coloneqq D_n-\EE(D_n)=D_n-n$ and $U'_n\coloneqq D'_n-\EE(D'_n)$. Our goal is to compute
	\begin{align*}
		\EE((T_n-\EE(T_n))^4)=\EE((U_n+U'_n)^4).
	\end{align*}
	Multiplying out the right hand side of this equation and using linearity of the expected value, we see that it suffices to compute $\EE(U_{n}^k {U'}_{n}^l)$ for all $0\leq k,l \leq 4$ with $k+l=4$.
	
	Using the smoothing theorem and Lemma~\ref{l:submartingaleA}, we derive the following recursion formula for fixed $k$ and $l$: 
	\begin{multline*}
		\EE\left(U_{n+1}^k ({U'_{n+1}})^l\right)=\EE\big(\left(U_{n}-\frac{1}{2}\right)^k \left(U'_{n}-\frac{1}{2}\right)^l P_1+\left(U_{n}+\frac{1}{2}\right)^k \left(U'_{n}-\frac{1}{2}\right)^l P_2\\
		+\left(U_{n}-\frac{1}{2}\right)^k \left(U'_{n}+\frac{1}{2}\right)^l P_3+\left(U_{n}+\frac{1}{2}\right)^k \left(U'_{n}+\frac{1}{2}\right)^l P_4\big),
	\end{multline*}
	where $P_1,\,P_2,\, P_3$ and $P_4$ are as in Lemma~\ref{l:submartingaleA} with $ D_n=U_n+\frac{n}{2} $ and $ D'_n=U'_n+\frac{n}{2} $.
	The right hand side of this equation only depends on $\EE(U_{n}^i {U'_{n}}^j)$ with $i\leq k$ and $j\leq l$.
	Hence, inductively computing $\EE(U_{n}^i {U'_{n}}^j)$ for all pairs $(i,j)$ with  $i\leq k$, $j\leq l$ and where at least one of this inequalities is strict, we obtain a recursion formula for $\EE(U_{n}^k {U'_{n}}^l)$.
	
	To obtain the claimed result, we computed the starting values with \sage and solved the recursion with the \textbf{RSolve} command of \Mathematica, just as in Section~\ref{s:fourthmomentD}. The intermediate results of these computations can be found in Appendix~\ref{app:recursionmomentsA}.
\end{proof}

\subsection{Moments of T for type \texorpdfstring{$\cox{B}_n$}{B}}
\label{s:fourthmomentTCoxB}
We now turn to type $\cox{B}_n$. Let $D_n\coloneqq D_{\cox{B}_n}$, $T_n\coloneqq T_{\cox{B}_n}$ and let $D_n'$ be the random variable associated to
\begin{align*}
	\cox{B}_n &\to \mathbb{N}\\
	w &\mapsto \des(w^{-1}).
\end{align*}
To compute the fourth central moment of $T_n=D_n+D'_n$, we want to take the same approach as in Section~\ref{s:fourthmomentTCoxA}. For this, we first need an analogue of Lemma~\ref{l:submartingaleA}.
We start by setting
\begin{align*}
	B_{n,i,j} \coloneqq \left|  \lbrace  w\in \cox{B}_n \, \middle|\, \des(w)=i \text{ and } \des(w^{-1})=j \rbrace \right|.
\end{align*}
These numbers are the coefficients of the \emph{type $\cox{B}_n$ two-sided Eulerian polynomial}
\begin{align*}
	B_n(s,t) \coloneqq \sum_{w\in \cox{B}_n} s^{\des(w)} t^{\des(w^{-1})},
\end{align*}
as studied by Visontai in \cite{Vis:Someremarksjoint}.
We clearly have
\begin{align*}
	\mathbb{P}(\,(D_n,D'_n)=(i,j)\,)= \frac{B_{n,i,j}}{|\cox{B}_n|}.
\end{align*}
\begin{lemma}
	\label{l:recursionBn}
	The numbers $B_{n,i,j}$ satisfy the following recursion formula:
	\begin{align}
		\begin{split}
			\label{eq:recursionBn}
			nB_{n,i,j}= & (n+i+j+2ij)B_{n-1,i,j} \\
			&+ (1-i+(2n+1)j - 2ij)B_{n-1,i-1,j}\\
			& +(1-j+(2n+1)i - 2ij)B_{n-1,i,j-1} \\
			&+ ( n(2n+3)-(2n+1)i - (2n+1)j + 2ij)B_{n-1,i-1,j-1}.
		\end{split}
	\end{align}
\end{lemma}
\begin{proof}
	In \cite[Theorem 15]{Vis:Someremarksjoint}, Visontai shows that the type $\cox{B}_n$ two-sided Eulerian polynomial satisfies 
	\begin{align*}
		nB_n(s,t)= & (2n^2st-nst+n)B_{n-1}(s,t) \\
		&+ (2nst(1-s)+s(1-s)(1-t))\frac{\partial}{\partial s} B_{n-1}(s,t)\\
		& +(2nst(1-t)+t(1-s)(1-t)) \frac{\partial}{\partial t} B_{n-1}(s,t) \\
		&+ 2st(1-s)(1-t)\frac{\partial^2}{\partial s \partial t}B_{n-1}(s,t).
	\end{align*}
	From this, \eqref{eq:recursionBn} follows by computing the derivatives and comparing the coefficients on both sides.
\end{proof}

Using this, we obtain the following analogue of Lemma~\ref{l:submartingaleA}:
\begin{lemma}\label{l:submartingaleB}
	In type $\cox{B}_n$, the conditional law of $ (D_{n+1},D'_{n+1})$ given $(D_{n},D'_{n}) $ is 
	\begin{align*}
		\EE((D_{n+1},D'_{n+1}) |(D_{n},D'_{n}))&=\begin{cases}
			\left(D_{n},D'_{n}\right) & \text{with prob.} ~ P_1=\frac{n+1+D_{n}+D'_{n}+2D_{n}D'_{n}}{2\left(n+1\right)^2},\\
			\left(D_{n}+1,D'_{n}\right) & \text{with prob.} ~ P_2= \frac{-D_{n}+\left(2n+1\right)D'_{n}-2D_{n}D'_{n}}{2\left(n+1\right)^2},\\
			\left(D_{n},D'_{n}+1\right) & \text{with prob.} ~P_3= \frac{\left(2n+1\right)D_{n}-D'_{n}-2D_{n}D'_{n}}{2\left(n+1\right)^2},\\
			\left(D_{n}+1,D'_{n}+1\right) & \text{with prob.}~  P_4= \frac{\left(2n+1\right)\left(n+1-\left(D_{n}+D'_{n}\right)\right) + 2D_{n}D'_{n}}{2\left(n+1\right)^2}.\\
		\end{cases}  
	\end{align*}
\end{lemma}
As in (\ref{eq:recursionB}), the signed permutation of length $ n $ corresponding to $ (D_n,D_n') $ is generated from the signed permutation of length $ n-1 $ corresponding to $ (D_{n-1},D'_{n-1}) $ by inserting $ n $ multiplied with a binary random variable that assigns equal probability to $ \{\pm 1\} $ in a signed permutation of length $ n-1 $.	
\begin{proof}
	Dividing both sides of \eqref{eq:recursionBn} by $n2^n n!$, we obtain
	\begin{align*}
		\frac{B_{n,i,j}}{|\cox{B}_n|}= & \frac{n+i+j+2ij}{2n^2} \frac{B_{n-1,i,j}}{|\cox{B}_{n-1}|} \\
		&+ \frac{1-i+(2n+1)j - 2ij)}{2n^2}\frac{B_{n-1,i-1,j}}{|\cox{B}_{n-1}|}\\
		& +\frac{1-j+(2n+1)i - 2ij)}{2n^2}\frac{B_{n-1,i,j-1}}{|\cox{B}_{n-1}|} \\
		&+ \frac{n(2n+3)-(2n+1)i - (2n+1)j + 2ij}{2n^2} \frac{B_{n-1,i-1,j-1}}{|\cox{B}_{n-1}|},
	\end{align*}
	where we used that $|\cox{B}_n|=2^n n!$.
	From this, the result follows because, as noted above, we have
	\begin{align*}
		\frac{B_{n,i,j}}{|\cox{B}_n|} = \mathbb{P}\big((D_n,D'_n)=(i,j)\big) && \text{ and } && \frac{B_{n-1,k,l}}{|\cox{B}_{n-1}|} = \mathbb{P}\big((D_{n-1},D'_{n-1})=(k,l)\big),
	\end{align*}
	and with the law of total probability, we derive the conditional probabilities. 
\end{proof}

\begin{proposition}\label{p:fourthmomentB} 
	In type $\cox{B}_n$, $ n\geq 4 $, the fourth central moment of $T_n$ is given by
	\begin{align*}
		\EE((T_n-\EE(T_n))^4)=\frac{1}{60} \left(5 n^2+39 n+79\right)+\frac{2 n-1}{4n(n-1)}.
	\end{align*}
\end{proposition}

\begin{proof}
	The proof is completely analogous to the one of Proposition~\ref{p:fourthmomentA}. Again, we set $U_n\coloneqq D_n-\EE(D_n)$ and $U'_n\coloneqq D'_n-\EE(D'_n)$ such that $T_n-\EE(T_n)=U_n+U'_n$ and observe that it suffices to compute $\EE(U_{n}^k {U'}_{n}^l)$ for all $0\leq k,l \leq 4$ with $k+l=4$.
	This can be done inductively using the recursion formula
	\begin{multline*}
		\EE\left(U_{n+1}^k ({U'_{n+1}})^l\right)=\EE\big(\left(U_{n}-\frac{1}{2}\right)^k \left(U'_{n}-\frac{1}{2}\right)^l P_1+\left(U_{n}+\frac{1}{2}\right)^k \left(U'_{n}-\frac{1}{2}\right)^l P_2\\
		+\left(U_{n}-\frac{1}{2}\right)^k \left(U'_{n}+\frac{1}{2}\right)^l P_3+\left(U_{n}+\frac{1}{2}\right)^k \left(U'_{n}+\frac{1}{2}\right)^l P_4\big),
	\end{multline*}
	where $P_1,\,P_2,\, P_3$ and $P_4$ are as in Lemma~\ref{l:submartingaleB} with $ D_n=U_n+\frac{n}{2} $ and $ D'_n=U'_n+\frac{n}{2} $. We solved the corresponding recursions with the \textbf{RSolve} command of \Mathematica; intermediate results can be found in Appendix~\ref{app:recursionmomentsB}.
\end{proof}

\subsection{Proof of \texorpdfstring{Theorem~\ref{t:fourthmoments}}{Theorem 7}}
\label{s:prooffourthmoments}
We are now able to prove Theorem~\ref{t:fourthmoments}:
\begin{proof}[Proof of Theorem~\ref{t:fourthmoments}.]
	For type $\cox{A}_n$ and $\cox{B}_n$, we obtained the result in Proposition~\ref{p:fourthmomentA} and Proposition~\ref{p:fourthmomentB}, respectively. For type $\cox{D}_n$, we exploit the similarity of $ \cox{B}_n $ and $ \cox{D}_n $ to bound the difference between the respective fourth moments. The group $ \cox{B}_n $ has a more combinatorial description as a group of signed permutations: It is isomorphic to the group of all mappings $ \tilde{\pi}:\{\pm 1,\ldots, \pm n \}\rightarrow \{\pm 1,\ldots, \pm n \} $ such that $ \tilde{\pi}(-i)=-\tilde{\pi}(i) $ (for further details, see \cite[Chapter 8]{BB:CombinatoricsCoxetergroups}). Choosing an element of $\cox{B}_n$ uniformly at random hence is equivalent to choosing a random permutation $\pi\in \on{Sym}(n)$ together with a tuple $(b_1,\ldots , b_n)\in \{\pm 1\}^n$---we then obtain $\tilde{\pi}\in \cox{B}_n$ by setting $\tilde{\pi}(i)\coloneqq b_i \cdot \pi(i)$.
	In this description, $\cox{D}_n$ is the subgroup of $\cox{B_n}$ given by all signed permutations $\tilde{\pi}$ such that $|\{ i\in \{ 1,\ldots, n \} \mid \tilde{\pi}(i)<0   \}|$ is an even number. Choosing an element of $\tilde{\pi} \in \cox{D}_n$ uniformly at random is equivalent to choosing a random permutation $\pi\in \on{Sym}(n)$ together with a tuple $(b_1,\ldots , b_{n-1})\in \{\pm 1\}^{n-1}$ and setting 
	\begin{align*}
		\tilde{\pi}(i)\coloneqq
		\begin{cases}
			b_i \cdot \pi(i) &,\, 1\leq i \leq n-1 \\
			(\prod_{j=1}^{n-1} b_j)\cdot \pi(i) &, i=n.
		\end{cases}
	\end{align*}
	
	These considerations imply that we can write 
	\[ T_{\cox{D}_n}\stackrel{d}{=}T_{\cox{B}_n}+Y_n,\]
	where $ Y_n $ is a bounded random variable (cf. \cite[Proof of Theorem 3]{Roettger2019}). 
	Using the Minkowski inequality, we obtain
	\begin{align*}
		\EE\left((T_{\cox{D}_n}-\EE(T_{\cox{D}_n}))^4 \right)&\le \left(\left(\EE\left((T_{\cox{B}_n}-\EE(T_{\cox{B}_n})^4\right)\right)^{\frac{1}{4}}+O(1)\right)^{4}= \EE\left((T_{\cox{B}_n}-\EE(T_{\cox{B}_n})^4 \right)+O\big(n^{\frac{3}{2}}\big).
	\end{align*}
	The result now follows from Proposition~\ref{p:fourthmomentB}.
\end{proof}

\begin{remark}
	The results of this section show the convenience of the conditional expectation to compute the expected value: Instead of a combinatorial approach as for example in the proof of \cite[Proposition 5.7]{KahleStump2018}, one derives a recursion formula and uses a recursion solver program like \textbf{RSolve} to find the solution. Of course, this approach is only possible if one can find a conditional expectation as for example in Lemma~\ref{l:submartingaleB}.
\end{remark}

\begin{remark}
	In \cite[Section 5.7]{oezdemir2019} it is shown how to derive the CLT for $ T $ in the case $ (W_n)_n=(\cox{A}_n)_{n} $ via the martingale convergence theorem and the recursive formulation of Lemma~\ref{l:submartingaleA}. This is an alternative proof of \cite[Theorem 1.1]{chatterjee2017} and one should be able to find an alternative proof for \cite[Theorem 2]{Roettger2019}, i.e.~to prove the CLT for $ T $ when $ (W_n)_n=(\cox{B}_n)_{n} $ with the given formulas for the moments of $ T_{\cox{B}} $.
\end{remark}

\section{CLTs for weighted sums of converging sequences} \label{s:rank=O(n)}

This section explains how to derive the asymptotic normality of a sequence of random variables $ (X_n)_n $, where $ X_n=\sum_{i=1}^{k_n}a_{n,i}X_{n,i} $, under the assumption that $ (X_{n,i})_n \stackrel{D}{\rightarrow}N(0,1) $ for all $i$.
The main idea is to use Lévy's continuity theorem via the pointwise convergence of the characteristic function of $ X_n $ towards the characteristic function of the standard normal distribution.
We begin with some preparations:
\begin{definition}
	The \emph{characteristic function} of a random variable $ X $ is defined as $ \psi_X(s):=\mathbb{E}\left(e^{isX}\right) $ for $ s\in \mathbb{R} $.
\end{definition}
For a detailed introduction to characteristic functions, see for example \cite{billingsley1995probability}. Now, Lévy's continuity theorem states the following:
\begin{theorem}[Lévy] \label{t:levi}
	For a sequence of random variables $ (X_n)_n $, it holds that $ X_n\stackrel{D}{\rightarrow}X $ for some random variable $ X $ if and only if $ \lim\limits_{n\rightarrow \infty}\psi_{X_n}(s)=\psi_X(s) $ for every $ s \in \mathbb{R} $.
\end{theorem}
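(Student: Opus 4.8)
The statement is an equivalence, and the plan is to treat the two implications separately. The ``only if'' direction is essentially a definition unwrap: if $X_n\stackrel{D}{\rightarrow}X$, then $\EE(f(X_n))\to\EE(f(X))$ for every bounded continuous $f\colon\mathbb{R}\to\mathbb{R}$, and applying this to the bounded continuous functions $x\mapsto\cos(tx)$ and $x\mapsto\sin(tx)$ and recombining gives $\psi_{X_n}(t)=\EE(e^{itX_n})\to\EE(e^{itX})=\psi_X(t)$ for each fixed $t$. So all the substance lies in the converse, and there my plan has four steps: (i) deduce tightness of $(X_n)_n$ from the pointwise convergence of the $\psi_{X_n}$; (ii) pass to subsequential weak limits; (iii) identify every such limit with $X$ via uniqueness of characteristic functions; (iv) upgrade to convergence of the full sequence by a subsequence argument.

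For step (i), the tool I would use is the elementary tail estimate
\[
\PP\!\left(|X_n|>\tfrac{2}{\delta}\right)\;\le\;\frac{1}{\delta}\int_{-\delta}^{\delta}\bigl(1-\psi_{X_n}(t)\bigr)\,dt ,
\]
valid for every $\delta>0$, which comes from Fubini's theorem applied to $\int_{-\delta}^{\delta}\bigl(1-\EE e^{itX_n}\bigr)\,dt$ together with the fact that $1-\tfrac{\sin u}{u}$ is bounded below by a positive constant for $|u|\ge 2$. Since $|\psi_{X_n}|\le 1$ uniformly and $\psi_{X_n}\to\psi_X$ pointwise, bounded convergence turns the right-hand side into $\tfrac{1}{\delta}\int_{-\delta}^{\delta}(1-\psi_X(t))\,dt+o(1)$; and since $\psi_X$ is a genuine characteristic function it is continuous at $0$ with $\psi_X(0)=1$, so this quantity can be made as small as we like by first choosing $\delta$ small and then $n$ large. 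That is exactly tightness of $(X_n)_n$.

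Step (ii) is then Prokhorov's theorem (or Helly selection): tightness makes $(X_n)_n$ relatively compact for weak convergence, so any subsequence has a further subsequence $(X_{n_k})_k$ with $X_{n_k}\stackrel{D}{\rightarrow}Y$ for some random variable $Y$. For step (iii), the already-proven ``only if'' direction gives $\psi_{X_{n_k}}\to\psi_Y$ pointwise, while by hypothesis $\psi_{X_{n_k}}\to\psi_X$ pointwise; hence $\psi_Y\equiv\psi_X$, and the uniqueness theorem for characteristic functions (Fourier inversion) forces $Y$ and $X$ to have the same law, so $X_{n_k}\stackrel{D}{\rightarrow}X$. Finally, step (iv): since every subsequence of $(X_n)_n$ has a further subsequence converging in distribution to $X$, the whole sequence does — this is precisely the elementary topological fact recalled in the proof of \cref{subsubsequence}, now applied in the space of Borel probability measures on $\mathbb{R}$ with the topology of weak convergence.

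The main obstacle will be step (i): extracting tightness, i.e.\ control of tail mass, from nothing but pointwise convergence of characteristic functions — this is the one place where the local behaviour of $\psi_X$ near $0$ has to be converted into a uniform tail bound, and the only step that is not purely formal. A route that avoids invoking Prokhorov is to smooth: the density of $X_n+\sigma Z$, with $Z$ an independent standard Gaussian, equals $\tfrac{1}{2\pi}\int e^{-itx}\psi_{X_n}(t)e^{-\sigma^2t^2/2}\,dt$, which converges pointwise in $x$ by dominated convergence (dominating function $e^{-\sigma^2t^2/2}$), so $X_n+\sigma Z\to X+\sigma Z$ in total variation by Scheffé's lemma; letting $\sigma\to 0$ and estimating the error in a metric for weak convergence then completes the converse. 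Either way, the uniqueness theorem for characteristic functions is an unavoidable ingredient.
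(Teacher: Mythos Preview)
Your argument is correct and is one of the standard textbook proofs of L\'evy's continuity theorem. However, the paper does not supply its own proof of \cref{t:levi}: the theorem is merely stated as background, with characteristic functions referenced to Billingsley's \emph{Probability and Measure}. So there is nothing in the paper to compare your proposal against; you have written out a proof where the authors simply quote the result.

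A couple of small remarks on your write-up, purely for polish. In the tail estimate of step~(i), the integrand $1-\psi_{X_n}(t)$ is complex-valued, and it is worth noting explicitly that the integral over the symmetric interval $[-\delta,\delta]$ is real (because $\psi_{X_n}(-t)=\overline{\psi_{X_n}(t)}$), which is why the inequality makes sense as stated. Also, in step~(iv) you invoke the fact from \cref{subsubsequence}; that lemma is phrased for real-valued sequences but, as you say, the underlying topological fact applies verbatim to the weak topology on probability measures. Both points are routine, and your overall outline---tightness from the tail bound, Prokhorov, uniqueness of characteristic functions, sub-subsequence---is exactly the classical route.
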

Characteristic functions of sums of independent random variables exhibit the following useful property:
\begin{lemma} \label{l:charfactorization} Let $ X $ and $ Y $ be real-valued random variables. If $ X $ and $ Y $ are independent and $ a,b \in \mathbb{R} $, it holds that $ \psi_{aX+bY}(s)=\psi_X(as)\psi_Y(bs) $ for every $ s\in \mathbb{R} $.
\end{lemma}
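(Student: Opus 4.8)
The plan is to unwind the definition of the characteristic function and then exploit the fact that Borel functions of independent random variables are again independent, combined with the multiplicativity of expectation over independent factors.

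First I would use the exponential law to write, for fixed $t\in\mathbb{R}$,
\[
\psi_{aX+bY}(t)=\EE\!\left(e^{it(aX+bY)}\right)=\EE\!\left(e^{i(at)X}\,e^{i(bt)Y}\right).
\]
Next, since $X$ and $Y$ are independent and the maps $z\mapsto e^{i(at)z}$ and $z\mapsto e^{i(bt)z}$ are Borel-measurable, the complex-valued random variables $e^{i(at)X}$ and $e^{i(bt)Y}$ are independent as well; moreover both have modulus $1$ and are therefore bounded, hence integrable.

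Then I would invoke that for independent, integrable complex-valued random variables $Z$ and $W$ one has $\EE(ZW)=\EE(Z)\EE(W)$. This reduces to the familiar real-valued statement by splitting $Z$ and $W$ into real and imaginary parts and expanding $ZW$, each of the four resulting products being a product of independent integrable real random variables. Applying this with $Z=e^{i(at)X}$ and $W=e^{i(bt)Y}$ yields
\[
\EE\!\left(e^{i(at)X}\,e^{i(bt)Y}\right)=\EE\!\left(e^{i(at)X}\right)\EE\!\left(e^{i(bt)Y}\right)=\psi_X(at)\,\psi_Y(bt),
\]
which is the claimed identity.

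There is essentially no serious obstacle here: the statement is a standard property of characteristic functions, and the only point deserving a word of care is the passage from the real-valued to the complex-valued factorization of expectation, which is handled by taking real and imaginary parts as above. Alternatively, one may simply cite this fact from \cite{billingsley1995probability}.
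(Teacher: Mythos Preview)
Your argument is correct; the paper itself does not supply a proof of this lemma at all, treating it as a standard fact about characteristic functions (with \cite{billingsley1995probability} as the ambient reference). Your write-up simply spells out the routine details that the paper omits.
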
 

Using the preceding results, one obtains the following lemma, which describes when a weighted sum of converging sequences satisfies the CLT.
Note that in the following, the array $ (X_{n,i})_{n,1\le i\le k_n} $ is not required to be triangular.

\begin{lemma} \label{l:infinitesum}
	For each $ n\in \mathbb{N} $, let $k_n\in \mathbb{N}_{>0}$ be a positive natural number. Let $a_{n,i}\in \mathbb{R}_{\geq 0}$,   $1\leq i \leq k_n$, such that $\sum_{i=1}^{k_n}a_{n,i}^2=1$ and let $ X_{n,i} $, $1\leq i \leq k_n$, be independent centred random variables with $ \VV(X_{n,i})=1 $.
	Define $ X_n= \nolinebreak\sum_{i=1}^{k_n}a_{n,i}X_{n,i} $. 
	Then if for each $i$, we have $ X_{n,i} \stackrel{D}{\rightarrow}N(0,1) $ and 
	\begin{align}
		\label{eq:conditionwellbehaved}
		\lim\limits_{k\rightarrow \infty}\sup\limits_n \left(\sum\limits_{i=k}^{k_n}a_{n,i}^2 \right) =0 ,
	\end{align}
	it follows that $ X_n\stackrel{D}{\rightarrow} N(0,1) $.
\end{lemma}

Before proving this, we give some comments on \eqref{eq:conditionwellbehaved}.
Let $X_n^k\coloneqq \sum_{i=1}^{\min(k,k_n)}a_{n,i}X_{n,i}$ be the random variable that is given by as the sum of the first $k$ summands of $X_n$. We have  $\VV(X_n) = \sum_{i=1}^{k_n}a_{n,i}^2=1$ and
\begin{gather*}
	\VV(X_n^k) = \sum\limits_{i=1}^{\min(k,k_n)}a_{n,i}^2 = 1- \sum\limits_{i=k}^{k_n}a_{n,i}^2.
\end{gather*}
Hence, \eqref{eq:conditionwellbehaved} is equivalent to
\begin{gather*}
	\lim\limits_{k\rightarrow \infty}\sup\limits_n \left(\VV(X_n) - \VV(X_n^k) \right) = 0.
\end{gather*}
This means that the statement of Lemma~\ref{l:infinitesum} can roughly be phrased as follows: If all the columns of the array $(X_{n,i})_{n\in \mathbb{N},1\le i\le k_n}$ satisfy the CLT and furthermore, the initial summands of $X_n$ asymptotically contain all of the variance of $X_n$, then $(X_n)_n$ satisfies the CLT.

\begin{proof}[Proof of Lemma~\ref{l:infinitesum}]
	The characteristic function of the normal distribution is $ e^{-\frac{1}{2}s^2} $. To prove the asymptotic normality of $ X_n $, we therefore show that for all $ s\in\mathbb{R} $ and any $ \delta>0 $, there is an $ N\in \mathbb{N} $ so that $ |\psi_{X_n}(s)-e^{-\frac{1}{2}s^2}|<\delta $  for all $ n\ge N $. Now,
	\begin{align*}
		|\psi_{X_n}(s)-e^{-\frac{1}{2}s^2}|
		&\le |\psi_{X_n}(s)-\psi_{\sum_{i=1}^{k}a_{n,i}X_{n,i}}(s)|+|\psi_{\sum_{i=1}^{k}a_{n,i}X_{n,i}}(s)-e^{-\frac{1}{2}s^2}|.
	\end{align*}
Condition~\eqref{eq:conditionwellbehaved} guarantees that for any $ \varepsilon>0 $, there is a finite $ k $ such that for all $n$, one has $ \sum\limits_{i=k+1}^{\infty}a_{n,i}^2\le \varepsilon $.
	We conclude for the first summand with Jensen's inequality and $ |e^{i\alpha}-1|\le |\alpha| $ that
	\begin{align*}
		|\psi_{X_n}(s)-\psi_{\sum_{i=1}^{k}a_{n,i}X_{n,i}}(s)|&=|\EE(e^{isX_n}-e^{is\sum_{i=1}^{k}a_{n,i}X_{n,i}})|\\
		&\le \EE|e^{is\sum_{i=k+1}^{\infty}a_{n,i}X_{n,i}}-1|\\
		&\le \EE|s\sum_{i=k+1}^{\infty}a_{n,i}X_{n,i}|\\
		&\le|s|\left(\EE\left(\sum_{i=k+1}^{\infty}a_{n,i}X_{n,i}\right)^2\right)^{\frac{1}{2}} \le |s|\left(\sum_{i=k+1}^{\infty}a_{n,i}^2\right)^{\frac{1}{2}} \le |s|\varepsilon^{\frac{1}{2}}.
	\end{align*}
	For the second summand, with the uniform convergence of characteristic functions on compact intervals and the asymptotic normality of $ (X_{n,i})_n $, i.e. $ \psi_{X_{n,i}}(s)\rightarrow e^{-\frac{1}{2}s^2} $ , we obtain for some positive constants $ C_1,C_2 $
	\begin{align*}
		|\psi_{\sum_{i=1}^{k}a_{n,i}X_{n,i}}(s)-e^{-\frac{s^2}{2}}|
		&\le |\prod_{i=1}^{k}\psi_{X_{n,i}}(a_{n,i}s)-\prod_{i=1}^{k}e^{-a_{n,i}^2\frac{s^2}{2}}| + |e^{-\sum_{i=1}^{k}a_{n,i}^2\frac{s^2}{2}}-e^{-\frac{s^2}{2}}|\\
		&\le C_1\varepsilon +|e^{-\frac{s^2}{2}}(e^{-(1-\sum_{i=1}^{k}a_{n,i}^2)\frac{s^2}{2}}-1)|\\
		&\le C_1\varepsilon +|e^{-\frac{s^2}{2}}(e^{-\varepsilon\frac{s^2}{2}}-1)|\le C_2 \varepsilon.
	\end{align*}	
	These considerations imply that for any $ \varepsilon>0 $ and some positive constant $ C_3(s) $, there is an $ N\in \mathbb{N} $ so that for all $ n\ge N $ it holds that $ |\psi_{X_n}(s)-e^{-\frac{1}{2}s^2}|\le C_3(s)\varepsilon=\delta $.
\end{proof}

The following lemma is a consequence of Lemma~\ref{l:infinitesum} when $ k_n $ is globally bounded, but additionally allows for summands that converge in probability towards zero, instead of converging in distribution to the standard normal distribution.

\begin{lemma}
	\label{l:CLTforSumofIndependent}
	Let $(X_n)_n$ be a sequence of centred random variables and suppose that there is $k\in \mathbb{N}$ such that for each $n$, $X_n$ can be written as a sum $X_n = X_{n,1} + \cdots + X_{n,k}$ of independent random variables $X_{n,i}$. Assume that for every $1\leq i \leq k$, the following holds true: Either $(X_{n,i})_n$ satisfies the CLT or $\frac{X_{n,i}}{\sqrt{\VV(X_{n})}}\stackrel{\PP}{\rightarrow} 0$. Then if at least one sequence $(X_{n,i})_n$ satisfies the CLT and $ \VV(X_{n}) \to \infty$, the sequence $(X_n)_n$ satisfies the CLT.
\end{lemma}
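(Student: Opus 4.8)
The plan is to reduce Lemma~\ref{l:CLTforSumofIndependent} to Lemma~\ref{l:infinitesum} by normalising each summand and carefully handling the summands that vanish in probability. Write $\sigma_n^2 \coloneqq \VV(X_n) = \sum_{i=1}^k \VV(X_{n,i})$, and set $a_{n,i}^2 \coloneqq \VV(X_{n,i})/\sigma_n^2$, so that $\sum_{i=1}^k a_{n,i}^2 = 1$. Partition the index set $\{1,\dots,k\}$ into $I_{\mathrm{CLT}}$, the indices for which $(X_{n,i})_n$ satisfies the CLT, and $I_0$, the indices for which $X_{n,i}/\sigma_n \stackrel{\PP}{\to} 0$; by hypothesis $I_{\mathrm{CLT}}\neq\emptyset$. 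The idea is that $X_n/\sigma_n = \sum_{i\in I_{\mathrm{CLT}}} a_{n,i}\cdot \tilde X_{n,i} + \sum_{i\in I_0} X_{n,i}/\sigma_n$, where $\tilde X_{n,i}\coloneqq (X_{n,i}-\EE(X_{n,i}))/\sqrt{\VV(X_{n,i})}$ is the standardisation (and $X_{n,i}$ is already centred, so $\tilde X_{n,i} = X_{n,i}/\sqrt{\VV(X_{n,i})}$); the first sum should be governed by Lemma~\ref{l:infinitesum} and the second is a negligible error term.

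First I would dispose of the $I_0$ part. Since $|I_0|\le k$ is finite and each $X_{n,i}/\sigma_n \stackrel{\PP}{\to} 0$, the sum $\sum_{i\in I_0} X_{n,i}/\sigma_n \stackrel{\PP}{\to} 0$. Hence by Slutsky's theorem it suffices to prove that $\sum_{i\in I_{\mathrm{CLT}}} a_{n,i} \tilde X_{n,i} \stackrel{D}{\to} N(0,1)$. A subtlety here is that this truncated sum need \emph{not} have variance exactly $1$: its variance is $\sum_{i\in I_{\mathrm{CLT}}} a_{n,i}^2 = 1 - \sum_{i\in I_0} a_{n,i}^2$, and $\sum_{i\in I_0} a_{n,i}^2 = \sum_{i\in I_0}\VV(X_{n,i})/\sigma_n^2$. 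Because each $X_{n,i}/\sigma_n\to 0$ in probability for $i\in I_0$ and these variables have uniformly bounded\,---\,wait, not necessarily; but one can argue directly: if some $(X_{n,i})_n$ with $i\in I_0$ had $\VV(X_{n,i})/\sigma_n^2$ not tending to $0$, one could pass to a subsequence along which $\VV(X_{n,i})/\sigma_n^2\to c>0$; combined with $X_{n,i}/\sigma_n\stackrel{\PP}{\to}0$ this forces $\VV(X_{n,i})/\sigma_n^2\to 0$ only if the $X_{n,i}/\sigma_n$ are uniformly integrable in square, which is not given. The cleaner route is to sidestep this: I would instead work with $\sigma_n' \coloneqq (\sum_{i\in I_{\mathrm{CLT}}}\VV(X_{n,i}))^{1/2}$ and renormalise by $\sigma_n'$, noting $X_n/\sigma_n' = (\sum_{i\in I_{\mathrm{CLT}}} X_{n,i})/\sigma_n' + (\sum_{i\in I_0} X_{n,i})/\sigma_n'$, show the second term $\to 0$ in probability using $\sigma_n'\ge \min_{i\in I_{\mathrm{CLT}}}\sqrt{\VV(X_{n,i})}$ together with $\VV(X_{n,i})\to\infty$ for $i\in I_{\mathrm{CLT}}$ (which follows from the CLT hypothesis only if\dots actually it does not). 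This is the part that needs care.

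Granting the reduction, the core step is: for the finitely many $i\in I_{\mathrm{CLT}}$, each $\tilde X_{n,i}\stackrel{D}{\to} N(0,1)$, the weights $b_{n,i}\coloneqq \sqrt{\VV(X_{n,i})}/\sigma_n'$ satisfy $\sum_{i\in I_{\mathrm{CLT}}} b_{n,i}^2 = 1$, and the tail condition \eqref{eq:conditionwellbehaved} is automatic because the sum has at most $k$ terms (take $k_0 > k$ in \eqref{eq:conditionwellbehaved} and the tail is empty). So Lemma~\ref{l:infinitesum} applies verbatim to $\sum_{i\in I_{\mathrm{CLT}}} b_{n,i}\tilde X_{n,i}$, giving convergence to $N(0,1)$; then Slutsky plus the vanishing $I_0$-term finishes the proof that $X_n/\sigma_n' \stackrel{D}{\to} N(0,1)$, and a final comparison of $\sigma_n'$ with $\sigma_n$ (they differ by the factor $\sqrt{1 - \sum_{i\in I_0}\VV(X_{n,i})/\sigma_n^2}\to 1$) upgrades this to $X_n/\sigma_n\stackrel{D}{\to}N(0,1)$.

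The main obstacle is the bookkeeping around the $I_0$ summands: one must verify both that $\sum_{i\in I_0}X_{n,i}/\sigma_n\stackrel{\PP}{\to}0$ and that the variance these summands carry is asymptotically negligible relative to $\sigma_n^2$, so that normalising by $\sigma_n$ versus $\sigma_n'$ makes no difference in the limit. The hypothesis $X_{n,i}/\sqrt{\VV(X_n)}\stackrel{\PP}{\to}0$ gives the first directly, and I expect the second to follow from it by a truncation/subsequence argument (if $\VV(X_{n,i})/\sigma_n^2 \not\to 0$ along a subsequence, combine with convergence in probability to $0$ and Fatou to derive a contradiction, using that $\EE((X_{n,i}/\sigma_n)^2 \wedge M)\to 0$ for every $M$ yet the second moment stays bounded below). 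Everything else\,---\,Slutsky, the empty-tail instance of Lemma~\ref{l:infinitesum}, and the ratio $\sigma_n'/\sigma_n\to 1$\,---\,is routine.
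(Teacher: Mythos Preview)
Your approach is precisely the paper's: peel off the $I_0$-terms via Slutsky, then feed the remaining $I_{\mathrm{CLT}}$-terms into \cref{l:infinitesum} (where the tail condition \eqref{eq:conditionwellbehaved} is vacuous because the sum has at most $k$ terms). The paper simply asserts that once $Z_n\coloneqq\sum_{i\in I_0}X_{n,i}/\sigma_n\stackrel{\PP}{\to}0$, Slutsky reduces the problem to showing that $X_n'\coloneqq\sum_{i\in I_{\mathrm{CLT}}}X_{n,i}$ satisfies the CLT; it then normalises $X_n'$ by $\sqrt{\VV(X_n')}$ and applies \cref{l:infinitesum}, silently passing over exactly the step you flag as the main obstacle: to return to the normalisation by $\sigma_n=\sqrt{\VV(X_n)}$ one needs $\sigma_n'/\sigma_n\to 1$, i.e.\ $\sum_{i\in I_0}\VV(X_{n,i})/\sigma_n^2\to 0$.

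Your proposed Fatou/truncation patch cannot succeed, because this implication is false and the lemma as stated admits counterexamples. Take $k=2$, let $X_{n,1}\sim N(0,n)$ (so $(X_{n,1})_n$ trivially satisfies the CLT), and let $X_{n,2}$ be independent of $X_{n,1}$ with $\PP(X_{n,2}=n)=\PP(X_{n,2}=-n)=1/n$ and $\PP(X_{n,2}=0)=1-2/n$. Then $\VV(X_{n,2})=2n$, $\VV(X_n)=3n\to\infty$, and $\PP(X_{n,2}\neq 0)=2/n\to 0$ gives $X_{n,2}/\sqrt{\VV(X_n)}\stackrel{\PP}{\to}0$; yet $X_n/\sqrt{3n}=X_{n,1}/\sqrt{3n}+X_{n,2}/\sqrt{3n}\stackrel{D}{\to}N(0,\tfrac13)$, so $(X_n)_n$ fails the CLT. (In your sketch, $\EE\bigl((X_{n,2}/\sigma_n)^2\wedge M\bigr)=2M/n\to 0$ for every $M$ while the full second moment stays at $2/3$: no contradiction.) The lemma becomes correct if the $I_0$-hypothesis is strengthened to $\VV(X_{n,i})/\VV(X_n)\to 0$, which is exactly what \cref{c:CLTforSumofIndependent} supplies and what the paper in fact verifies in every application; under that hypothesis your argument (and the paper's) goes through without further work.
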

\begin{proof}
	Without loss of generality, we can assume that there is $ k'\geq 1$ such that for $1\leq i\leq k'$, the sequence $(X_{n,i})_n$ satisfies the CLT while for all $i> k'$, we have $\frac{X_{n,i}}{\sqrt{\VV(X_{n})}}\stackrel{\PP}{\rightarrow} 0$. This implies that 
	\begin{align*}
		Z_n\coloneqq \frac{X_{n,k'+1} + \cdots + X_{n,k}}{\sqrt{\VV(X_{n})}}\stackrel{\PP}{\rightarrow} 0
	\end{align*}	
	Using Slutsky's Theorem \cite[Theorem 2.3.3]{Lehmann1998}, we see that $ X_{n} $ satisfies the CLT if the remaining sum $ X_n' = X_n-Z_n = X_{n,1} + \cdots + X_{n,k'}$ satisfies the CLT.
	We can write
	\begin{align*}
		\frac{X_n'}{\sqrt{\VV (X_n')}}= \sum_{i=1}^{k'} a_{n,i} \frac{X_{n,i}}{\sqrt{\VV(X_{n,i})}},\hspace{0.5 cm} \text{ where } \hspace{0.5 cm} a_{n,i} = \sqrt{\frac{\VV(X_{n,i})}{\VV(X_n')}} .
	\end{align*}
	We have
	\begin{align*}
		\sum_{i=1}^{k'}a_{n,i}^2 = \frac{\sum_{i=1}^{k'} \VV(X_{n,i})}{\VV(X_n')} = 1, 
	\end{align*}	 
	so the claim follows from Lemma~\ref{l:infinitesum} as \eqref{eq:conditionwellbehaved} is trivially satisfied.
\end{proof}

\begin{lemma} \label{l:CLTforSumofIndependent2}
	In the setting of Lemma~\ref{l:CLTforSumofIndependent}, the condition  $\frac{X_{n,i}}{\sqrt{\VV(X_{n})}}\stackrel{\PP}{\rightarrow} 0$ holds if $ \frac{\VV(X_{n,i})}{\VV(X_{n})}\rightarrow~0 $.
\end{lemma}
\begin{proof}
	The Chebyshev inequality shows that
	\begin{align*}
		\PP\left(\frac{|X_{n,i}|}{\sqrt{\VV(X_n)}}\ge \varepsilon\right)&\le \frac{\VV(X_{n,i})}{\varepsilon^2 \VV(X_n)},
	\end{align*}
	which implies the convergence in probability of $ \frac{|X_{n,i}|}{\sqrt{\VV(X_n)}} $ towards zero if $ \frac{\VV(X_{n,i})}{\VV(X_{n})}\rightarrow 0 $.
\end{proof}

\section{CLT via the Lindeberg Theorem} \label{s:Lindeberg}
A collection $\left(X_{n,i}\right)_{n\ge 1}^{1\le i \le k_n}$ of random variables is called a  \emph{triangular array} if for each $n$, all $X_{n,i}$ are independent of each other. A triangular array is called \emph{centred} if $\EE(X_{n,i})=0$ for all $n$ and $i$. Given such a triangular array, we set 
\begin{align*}
	X_n\coloneqq \sum_{i=1}^{k_n}X_{n,i},&& s_{n,i}^2\coloneqq \VV(X_{n,i}) && \text{and} && s_n^2\coloneqq \VV(X_n)=\sum_{i=1}^{k_n}s^2_{n,i}.
\end{align*} 
The array $(X_{n,i})_{n,i}$ satisfies the \emph{maximum condition} if 
\begin{align}
	\lim_{n \rightarrow \infty} \max_{1\le i \le k_n} \frac{s_{n,i}^2}{s_{n}^2}=0.\label{eq:maximum_condition}
\end{align}
It satisfies the \emph{Lindeberg condition} if for every $ \varepsilon>0 $,
\begin{align*}
	\frac{1}{s_n^2}\sum_{i=1}^{k_n}\mathbb{E}\left(X_{n,i}^2\mathbbm{1}_{\{|X_{n,i}|>\varepsilon s_n\}}\right)\rightarrow 0,
\end{align*}
where $\mathbbm{1}_{\lbrace \cdot \rbrace}$ denotes the indicator function. 
The significance of these conditions for us is as follows:
\begin{theorem}[Lindeberg]
	\label{t:Lindeberg}
	Let $(X_{n,i})_{n,i}$ be a centred triangular array. Then $(X_{n,i})_{n,i}$ satisfies the Lindeberg condition if and only if it satisfies the maximum condition and the sequence $(X_n)_n$ satisfies the CLT.
\end{theorem}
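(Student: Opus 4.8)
The plan is to establish the two implications of this Lindeberg--Feller equivalence separately, in both cases passing to characteristic functions and invoking Lévy's continuity theorem (\cref{t:levi}) together with the factorisation property from \cref{l:charfactorization}. After replacing each $X_{n,i}$ by $X_{n,i}/s_n$ we may assume $s_n=1$, so that $\sum_{i=1}^{k_n}s_{n,i}^2=1$ and the CLT for $(X_n)_n$ becomes $\prod_{i=1}^{k_n}\psi_{X_{n,i}}(t)\to e^{-t^2/2}$ for every $t\in\mathbb{R}$. Abbreviate $L_n(\varepsilon):=\sum_i\EE\big(X_{n,i}^2\mathbbm{1}_{\{|X_{n,i}|>\varepsilon\}}\big)$, so that the Lindeberg condition is precisely the statement that $L_n(\varepsilon)\to 0$ for every $\varepsilon>0$.

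For the direction ``Lindeberg condition $\Rightarrow$ maximum condition and CLT'', the maximum condition drops out immediately: splitting $s_{n,i}^2=\EE(X_{n,i}^2)$ according to whether $|X_{n,i}|\le\varepsilon$ gives $\max_i s_{n,i}^2\le\varepsilon^2+L_n(\varepsilon)$, and letting $n\to\infty$ and then $\varepsilon\to0$ forces $\max_i s_{n,i}^2\to0$. For the CLT I would use the Taylor estimate $|e^{iy}-1-iy+\tfrac{y^2}{2}|\le\min(\tfrac{|y|^3}{6},y^2)$ together with $\EE(X_{n,i})=0$ to write $\psi_{X_{n,i}}(t)=1-\tfrac{t^2}{2}s_{n,i}^2+r_{n,i}(t)$ with $\sum_i|r_{n,i}(t)|\le\tfrac{|t|^3\varepsilon}{6}+t^2L_n(\varepsilon)$, which tends to $0$ on sending $n\to\infty$ and then $\varepsilon\to0$. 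Since $|\psi_{X_{n,i}}(t)|\le1$ and $|e^{-t^2 s_{n,i}^2/2}|\le1$, the elementary inequality $|\prod z_i-\prod w_i|\le\sum|z_i-w_i|$ combined with $|1-x-e^{-x}|\le\tfrac{x^2}{2}$ for $x\ge0$ bounds $\big|\prod_i\psi_{X_{n,i}}(t)-\prod_i e^{-t^2 s_{n,i}^2/2}\big|$ by $\sum_i|r_{n,i}(t)|+\tfrac{t^4}{8}\sum_i s_{n,i}^4$, and $\sum_i s_{n,i}^4\le\max_i s_{n,i}^2\to0$. Since $\prod_i e^{-t^2 s_{n,i}^2/2}=e^{-t^2/2}$, the CLT follows from \cref{t:levi}.

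The harder direction is ``maximum condition and CLT $\Rightarrow$ Lindeberg condition''. First, the maximum condition and $\EE(X_{n,i})=0$ give $|\psi_{X_{n,i}}(t)-1|\le\tfrac{t^2}{2}s_{n,i}^2\le\tfrac{t^2}{2}\max_j s_{n,j}^2\to0$, uniformly in $i$. I would then compare $\prod_i\psi_{X_{n,i}}(t)$ with $\exp\big(\sum_i(\psi_{X_{n,i}}(t)-1)\big)=\prod_i e^{\psi_{X_{n,i}}(t)-1}$; both products have all factors of modulus at most $1$, for the second because $\mathrm{Re}(\psi_{X_{n,i}}(t)-1)=\EE(\cos tX_{n,i})-1\le0$. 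Using $|\prod z_i-\prod w_i|\le\sum|z_i-w_i|$ and the estimate $|z-e^{z-1}|\le C|z-1|^2$ valid once $|z-1|\le\tfrac12$, the difference is at most $C\,(\max_i|\psi_{X_{n,i}}(t)-1|)\sum_i|\psi_{X_{n,i}}(t)-1|\le C\cdot\tfrac{t^2}{2}\max_j s_{n,j}^2\cdot\tfrac{t^2}{2}\to0$. Together with the CLT this gives $\big|\exp\big(\sum_i(\psi_{X_{n,i}}(t)-1)\big)\big|\to e^{-t^2/2}$, i.e. $\sum_i\EE(1-\cos tX_{n,i})=-\mathrm{Re}\sum_i(\psi_{X_{n,i}}(t)-1)\to\tfrac{t^2}{2}$ for each fixed $t$. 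Passing through $\exp$ of a sum rather than a complex logarithm sidesteps all branch-of-$\log$ issues, and only the real part is needed.

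To conclude, fix $\varepsilon>0$ and choose $t=t(\varepsilon)$ with $t\varepsilon\ge2\sqrt2$; then whenever $|x|>\varepsilon$ one has $t^2x^2>8$, hence $\tfrac{t^2x^2}{2}-(1-\cos tx)\ge\tfrac{t^2x^2}{2}-2\ge\tfrac{t^2x^2}{4}$, while $\tfrac{t^2x^2}{2}-(1-\cos tx)\ge0$ for every $x$. Taking expectations and summing over $i$,
\[
\frac{t^2}{4}L_n(\varepsilon)\ \le\ \frac{t^2}{2}\sum_i s_{n,i}^2-\sum_i\EE(1-\cos tX_{n,i})\ =\ \frac{t^2}{2}-\sum_i\EE(1-\cos tX_{n,i})\ \longrightarrow\ 0,
\]
so $L_n(\varepsilon)\to0$, and since $\varepsilon>0$ was arbitrary the Lindeberg condition holds. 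I expect this converse to be the main obstacle: it requires some care with complex-valued characteristic functions, and the key device is the replacement of the product by the exponential of a sum followed by extraction of the real part, which is precisely the quantity controlled by the $1-\cos$ truncation estimate in the final step.
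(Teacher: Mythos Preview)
The paper does not prove \cref{t:Lindeberg}; it is stated there as the classical Lindeberg--Feller theorem and used as a black box throughout \cref{s:Lindeberg}. Your proposal therefore cannot be compared against a proof in the paper, but it is a correct and complete treatment of both implications via characteristic functions, along the standard lines one finds e.g.\ in Billingsley or Feller. The forward direction is the usual Lindeberg argument; your handling of the converse (Feller's direction)---replacing $\prod_i\psi_{X_{n,i}}(t)$ by $\exp\!\big(\sum_i(\psi_{X_{n,i}}(t)-1)\big)$ so as to extract the real part $\sum_i\EE(1-\cos tX_{n,i})$ without taking complex logarithms, and then feeding this into the truncation inequality $\tfrac{t^2x^2}{2}-(1-\cos tx)\ge\tfrac{t^2x^2}{4}$ for $|tx|\ge2\sqrt2$---is clean and correct. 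There is nothing further to contrast.
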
 
The Lindeberg condition is implied by the \emph{Lyapunov condition}, which is satisfied if for some $ \delta>0 $ it holds that
\begin{align*}
	\frac{1}{s_n^{2+\delta}} \sum_{i=1}^{k_n}\mathbb{E}\left(|X_{n,i}|^{2+\delta}\right) \rightarrow 0.
\end{align*}
To apply this to our setting, let $(W_n)_n$ be a sequence of finite Coxeter groups and let 
\begin{align*}
	W_n&= \prod_{i=1}^{k_n} W_{n,i},
\end{align*}
be the decomposition of $W_n$ into its irreducible components. Now, let $T_n$ be the random variable associated to the statistic $t$ on $W_n$. By Lemma~\ref{l:sumofindependent}, we have
\begin{align*}
	T_n= \sum_{i=1}^{k_n} T_{n,i},
\end{align*}
where $T_{n,i}$ is the random variable associated to the statistic $t$ on $W_{n,i}$.
From this, we obtain a centred triangular array by setting $X_{n,i}\coloneqq T_{n,i}-\EE(T_{n,i})$. By the arguments above, we have $X_n=T_{n}-\EE(T_{n})$.

\begin{lemma} \label{l:CLTviaLindeberg}
	Let $(W_n)_n$ be a sequence of finite Coxeter groups such that $\VV(T_{n,1})\ge \ldots \ge \VV(T_{n,k_n})$ for all $n$ and such that
	$\VV(T_{n,1}) = o(\VV(T_{n}))$ and $ \VV(T_{n})\to \infty $.
	Then $ (T_n)_n $ satisfies the CLT.
\end{lemma}
\begin{proof}
	As above, let $(X_{n,i})_{n,i}$ be the triangular array associated to the sequence $(W_n)_n$. 
	We want to apply the Lindeberg Theorem. The maximum condition is satisfied by assumption, so we only need to verify the Lindeberg condition. We do so via the Lyapunov condition. 
	To check the Lyapunov condition, we choose $ \delta=2 $. We see that $ \EE(X_{n,i}^4)=O(s_{n,i}^4) $ for the non-dihedral infinite families (cf.~Theorem~\ref{t:fourthmoments}).
	If $W_{n,i}$ is of dihedral or exceptional type, $|X_{n,i}|$ is globally bounded: This is clear for the finitely many exceptional types. For $w\in \cox{I}_2(m_{n,i})$, it is easy to verify that 
	\begin{align*}
		0\leq t(w)=\des(w)+\des(w^{-1})\leq 4.
	\end{align*}
	We have $\rk(\cox{I}_2(m_{n,i}))=2$, so by Theorem~\ref{t:varianceofT}, one has 
	\begin{align*}
		|X_{n,i}|=|T_{n,i}-\EE(T_{n,i})|\leq 2. 
	\end{align*}
	Therefore, the fourth moment of the dihedral or exceptional	type is bounded by a constant, so $ \EE(X_{n,i}^4)=O(1)=O(s_{n,i}^4) $. 
	Now, as $ s_{n,1}^2=o(s_n^2) $ and $ s_n^2=\sum_{i=1}^{k_n}s_{n,i}^2 $, the Lyapunov condition holds, because
	\begin{align*}
		\sum_{i=1}^{k_n}\mathbb{E}\left(|X_{n,i}|^{4}\right)&=O\left(\sum_{i=1}^{k_n}s_{n,i}^4\right)=O(s_{n,1}^2 s_n^2).
	\end{align*}
\end{proof}

\section{Proof of the main theorem} \label{s:mainresult}
Throughout this section, let $(W_n)_n$ be a sequence of finite Coxeter groups such that $\rk(W_n)\to \infty$, let 
\begin{align*}
	W_n&= \prod_{i=1}^{k_n} W_{n,i}
\end{align*}
be the decomposition of $W_n$ into its irreducible components and define $T_n\coloneqq T_{W_{n}}$ and $T_{n,i}\coloneqq T_{W_{n,i}}$.

\begin{assumption}\label{as:ordered}
	We assume that the irreducible components are ordered such that for all $n$, we have $\VV(T_{n,1})\ge \ldots \ge \VV(T_{n,k_n})$.
\end{assumption}

In the previous section, we proved the CLT for sequences where the variance of $T_{n,i}$ was of smaller magnitude than the variance of $T_{n}$  (Lemma~\ref{l:CLTviaLindeberg}).
However, this need not be the case in general; if the $W_{n,i}$ are of non-dihedral type, it is possible that for some $i$, the rank of $W_{n,i}$ is of the same order as the rank of $W_n$. An easy example of this is given by setting $W_n\coloneqq \prod_{i=1}^{k} \cox{A}_n$ for some $k\in \mathbb{N}$; here, we have $\VV(T_n)/ \VV(T_{n,i})=k$ for all $n$. An example with a growing number of irreducible components is the sequence $ W_n=\prod_{i=1}^{\lceil \log(n)\rceil}A_{\lceil\frac{n}{2^i}\rceil} $, so that $\VV(T_n)/ \VV(T_{n,i})=2^i $.
In order to extend our results to these cases, we need to separate the irreducible components that do not satisfy the maximum condition~\eqref{eq:maximum_condition} from the remaining ones. For this, we make the following definition:

Let $f:\mathbb{R}_{\geq 0} \to \mathbb{R}_{\geq 0}$ be a non-decreasing map such that $f(n)=o(n)$.
An irreducible component $ W_{n,i} $ of $W_n$ is called \emph{$f$-small}, if $ \VV(T_{n,i})\le f(\VV(T_n))$.
Let 
\begin{equation*}
m_n\coloneqq \min\{i\in \mathbb{N}:~ W_{n,i+1}~ \text{is $f$-small}\}.
\end{equation*}
By Assumption~\ref{as:ordered}, $ W_{n,i} $ is $ f $-small for all $ i\ge m_n $.
We define $M_n^f\coloneqq \prod_{i=1}^{m_n} W_{n,i}$ and $W_n^f \coloneqq \prod_{i=m_n+1}^{k_n} W_{n,i}$. For all $n$, we can write $W_n=M_n^f \times W_n^f$.
By Lemma~\ref{l:sumofindependent}, we have
\[T_n=T_{M_n^f}+T_{W_n^f}=\sum_{i=1}^{m_n} T_{n,i}+\sum_{i=m_n+1}^{k_n} T_{n,i} .\]

\begin{remark}
	\label{r:DihedralExceptionalSmall}
	Among the class of all finite irreducible Coxeter groups $W$ of dihedral or exceptional type, the variance $\VV(T_W)$ is bounded from above: If $W$ is dihedral, then $\VV(T_W)\leq 2$ and there are only finitely many exceptional types. Hence if $\VV(T_n)\to \infty$, then for every  non-decreasing $f:\mathbb{R}_{\geq 0}\to \mathbb{R}_{\geq 0}$ with $f(n)=o(n)$, there is $N\in \mathbb{N}$ such that for all $n\geq N$, every irreducible component of $W_n$ is either of type $\cox{A}, \cox{B}$ or $\cox{D}$ or it is $f$-small.
\end{remark}

As was shown by Chatterjee--Diaconis \cite{chatterjee2017} and  R\"ottger \cite{Roettger2019}, the sequences $ T_{\cox{A}_n},T_{\cox{B}_n} $ and $ T_{\cox{D}_n} $ satisfy the CLT. This allows us to apply Lemma~\ref{l:infinitesum} if the sequence $ (W_n)_n $ satisfies the following property:
\begin{definition}
	The sequence $ \left(W_n\right)_n $ is \emph{well-behaved}, if there exists a non-decreasing function $f:\mathbb{R}_{\geq 0}\to \mathbb{R}_{\geq 0}$ with $f(n)=o(n)$, such that
	\begin{align}
		\label{eq:wellbehavedcondition}
		\lim\limits_{k\rightarrow \infty}\sup\limits_n \left( \sum\limits_{i=k}^{m_n}\frac{\VV(T_{n,i})}{\VV(T_{M_n^f})} \right) = 0.
	\end{align}
\end{definition}
Note that the condition~\eqref{eq:wellbehavedcondition} relates directly to condition~\eqref{eq:conditionwellbehaved} when we are interested in deriving the CLT for $ T_{M_n^f} $ in the case that the $ T_{n,i} $ satisfy the CLT.
\begin{remark}
	While the definition seems to be rather technical, the authors have failed to construct a sequence that is not well-behaved. A reason why it is hard to find such a sequence is the following: 
	
	A sequence is always well-behaved if $m_n$, the number of irreducible components that are not $f$-small, is bounded. This follows because under Assumption~\ref{as:ordered} we have
	\begin{align*}
		\sum\limits_{i=k}^{m_n}\frac{\VV(T_{n,i})}{\VV(T_{M_n^f})} \leq \max\{m_n-k,0\} \cdot \frac{\VV(T_{n,1})}{\VV(T_{M_n^f})}.
	\end{align*}
	That $m_n$ is bounded is for example the case if the rank or the number of irreducible components in $W_n$ are bounded. It is also the case if 
	there is a $J\in \mathbb{N}$ such that for all $i>J$, the sequence of $i$-th components $(T_{n,i})_{n \in \mathbb{N}}$ satisfies $(\VV(T_{n,i}))_{n \in \mathbb{N}} = o((\VV(T_n))_{n \in \mathbb{N}})$. If there are no irreducible components of dihedral type, the latter is satisfied if there is $J\in \mathbb{N}$ such that for all $i>J$, we have $(\rk(W_{n,i}))_{n \in \mathbb{N}} = o((\rk(W_n))_{n \in \mathbb{N}})$ (see Remark~\ref{r:DihedralExceptionalSmall} and Theorem~\ref{t:varianceofT}); in other words, the sequence is well-behaved if there are only boundedly many irreducible components of $W_n$ that have rank not in $o(\rk(W_n))$.
	
	So if one wants to find a sequence that is not well-behaved, one needs $m_n$ to be unbounded. However, even then well-behavedness occured in all examples that the authors considered, see e.g.~Example~\ref{ex:CoxetergroupsandCLT} (i).
\end{remark}

\begin{remark}
	\label{r:subsequencewellbehaved}
	For all $L\subseteq \mathbb{N}$, we obviously have
	\begin{align*}
		\sup\limits_{n\in L} \left( \sum\limits_{i=k}^{m_n}\frac{\VV(T_{n,i})}{\VV(T_{M_n^f})} \right) \leq \sup\limits_{n\in \mathbb{N}} \left( \sum\limits_{i=k}^{m_n}\frac{\VV(T_{n,i})}{\VV(T_{M_n^f})} \right) \text{ for all } k.
	\end{align*}
	Thus, every subsequence of a well-behaved sequence is well-behaved again.
\end{remark}

\begin{proposition}
	\label{p:CLTforNonDihedral}
	If $ \left(W_{n}\right)_n $ is well-behaved and $\VV(T_n)\to \infty$, the sequence $(T_n)_n$ satisfies the CLT.
\end{proposition}

\begin{proof}
	Choose $f$ such that \eqref{eq:wellbehavedcondition} is satisfied.
	As noted above, we have $ T_n=T_{M_n^f}+T_{W_n^f} $, so by assumption $\VV(T_{M_n^f})+ \VV(T_{W_n^f}) = \VV(T_n) \to \infty$.
	
	By Lemma~\ref{l:subsubsequence}, it suffices to show that every subsequence of $(T_n)_{n\in  \mathbb{N}}$ has a subsequence which satisfies the CLT. For any $L\subseteq \mathbb{N}$, the subsequence $(W_n)_{n\in L}$ satisfies all conditions of the proposition: Obviously, the rank $(\rk(W_n))_{n\in L}$ tends to infinity and so does the variance $(\VV(T_n))_{n\in L}$. Furthermore, the sequence is well-behaved as noted in Remark~\ref{r:subsequencewellbehaved}. Hence, it suffices to consider the case where $L = \mathbb{N}$: We will now show that an (arbitrary) sequence $(T_n)_{n\in  \mathbb{N}}$ as in the statement of the proposition has a subsequence that satisfies the CLT. It then follows that every subsequence of $(T_n)_{n\in  \mathbb{N}}$ has a subsequence with this property as well.
	
	If  $\VV(T_{M_n^f})=o(\VV(T_n))$, then $\VV(T_{W_n^f})$ is of the same order as $\VV(T_n)$. Hence, as every irreducible factor of $W_n^f=\prod_{i=m_n+1}^{k_n} W_{n,i}$ is $f$-small, we have $\VV(T_{n,{m_{n+1}}}) = o(\VV(T_{W_n^f}))$. This allows us to apply Lemma~\ref{l:CLTviaLindeberg} to see that $(T_{W_n^f})_n$ satisfies the CLT. The CLT for $(T_n)_n$ now follows---even without passing to a subsequence---from Lemma~\ref{l:CLTforSumofIndependent} and Lemma~\ref{l:CLTforSumofIndependent2} because $\VV(T_{M_n^f}) / \VV(T_n)\to 0$.
	
	Next assume that $\VV(T_{M_n^f})\not=o(\VV(T_n))$. In this case, there is $L\subseteq \mathbb{N}$ such that $(\VV(T_{M_n^f}))_{n\in L}\to \infty$ holds true\footnote{Note that $(\VV(T_{M_n^f}))_{n\in \mathbb{N}}\to \infty$ need not be true. This makes it necessary to pass to a subsequence here---in contrast to the previous paragraph.}. The subsequence $(M_n^f)_{n\in L}$ is again well-behaved and as noted in Remark~\ref{r:DihedralExceptionalSmall}, we can assume that every irreducible component of $M_n^f$ is of type $\cox{A}, \cox{B}$ or $\cox{D}$. Thus, it follows from \cite{chatterjee2017}, \cite{Roettger2019} and Lemma~\ref{l:infinitesum} that the sequence $(T_{M_n^f})_{n\in L}$ satisfies the CLT. 
	The asymptotic normality of $(T_n)_{n\in L}$ now follows from Lemma~\ref{l:CLTforSumofIndependent} and Lemma~\ref{l:CLTforSumofIndependent2}: Either $\VV(T_{W_n^f})$ is of the same order as $\VV(T_n)$; because every component of $W_n^f$ is $f$-small, this implies that after possible passing to a further subsequence, $T_{W_n^f}$ satisfies the CLT. Or we have $\VV(T_{W_n^f}) / \VV(T_n)\to 0$.
\end{proof}

We are now ready to prove our main theorem. Each $W_n$ decomposes uniquely as 
\begin{align*}
	W_n= G_n \times I_n,
\end{align*}
where no irreducible component of $G_n$ is of dihedral type and 
\begin{align*}
	I_n= \prod_{i=1}^{l_n} \cox{I}_2(m_{n,i}).
\end{align*}
Note that by Remark~\ref{r:DihedralExceptionalSmall}, the sequence $(W_n)_n$ is well-behaved if and only if $(G_n)_n$ is. We use this decomposition in order to combine the results obtained so far and show:

\begin{theorem}\label{t:mainresult}
	Let $T_n$ be the random variable associated to the statistic $t$ on $W_n$. Assume that $(W_n)_n$ is well-behaved. Then the following are equivalent:
	\begin{enumerate}
		\item \label{i:CLTsatisfied} $(T_n)_n$ satisfies the CLT;
		\item \label{i:Variancetoinfty} $\VV(T_n)\rightarrow \infty$;
		\item \label{i:Variancesseparately} $\rk(G_n)+\sum_{i=1}^{l_n} \frac{1}{m_{n,i}}\rightarrow \infty $.
	\end{enumerate}
\end{theorem}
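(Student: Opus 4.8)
\textbf{Proof strategy for \cref{t:mainresult}.}
The plan is to establish the cycle of implications $\eqref{i:CLTsatisfied}\Rightarrow\eqref{i:Variancetoinfty}\Rightarrow\eqref{i:Variancesseparately}\Rightarrow\eqref{i:CLTsatisfied}$, using the decomposition $W_n = G_n \times I_n$ together with the independence statement of \cref{l:sumofindependent}. The first implication is the ``soft'' direction: if a sequence of random variables normalised by mean and standard deviation converges to $N(0,1)$, its variance cannot stay bounded—otherwise one could pass to a subsequence along which $\VV(T_n)$ converges to a finite limit, and then $T_n - \EE(T_n)$ itself (not normalised) would converge in distribution to a Gaussian with positive variance while simultaneously, being a sum of the bounded-variance pieces, failing to spread out; more directly, the normalising factor $\sqrt{\VV(T_n)}$ would be bounded, $T_n$ is integer-valued, and a sequence of integer-valued random variables with bounded variance cannot have $(T_n-\EE(T_n))/\sqrt{\VV(T_n)}$ converge to a continuous distribution unless the variances blow up. The equivalence $\eqref{i:Variancetoinfty}\Leftrightarrow\eqref{i:Variancesseparately}$ is then pure bookkeeping via \cref{t:varianceofT}: since $\VV(T_n) = \VV(T_{G_n}) + \VV(T_{I_n})$ by independence, and \cref{t:varianceofT} tells us $\VV(T_{G_n})$ is of order $\rk(G_n)$ while $\VV(T_{I_n}) = \sum_{i=1}^{l_n} \tfrac{4}{m_{n,i}}$ (the factor depends on the variance normalisation in \cref{t:varianceofT}, but is bounded above and below by positive constants times $\sum 1/m_{n,i}$), we get that $\VV(T_n)\to\infty$ exactly when $\rk(G_n) + \sum_{i=1}^{l_n} \tfrac{1}{m_{n,i}} \to \infty$.

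The substantive direction is $\eqref{i:Variancesseparately}\Rightarrow\eqref{i:CLTsatisfied}$. Here I would again write $T_n = T_{G_n} + T_{I_n}$ as a sum of two independent pieces and split into cases according to which of $\rk(G_n)$, $\sum 1/m_{n,i}$ carries the variance. Using \cref{subsubsequence}, it suffices to produce, for every subsequence, a further subsequence satisfying the CLT, so I may pass to subsequences freely and assume each of the quantities $\VV(T_{G_n})$ and $\VV(T_{I_n})$ either converges to a finite limit or tends to infinity. If $\VV(T_{I_n})\to\infty$, then by \cref{l:CLTforDihedral} the sequence $(T_{I_n})_n$ satisfies the CLT (noting $(W_n)_n$ well-behaved forces no pathology on the dihedral part, and the hypothesis $\sum 1/m_{n,i}\to\infty$ is exactly what that lemma needs). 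If instead $\rk(G_n)\to\infty$: since $(W_n)_n$, hence $(G_n)_n$, is well-behaved, and every irreducible component of $G_n$ is of non-dihedral type, \cref{t:CLTforNonDihedral} gives that $(T_{G_n})_n$ satisfies the CLT. In either case at least one of the two independent summands $T_{G_n}, T_{I_n}$ satisfies the CLT (the one whose variance tends to infinity; note $\eqref{i:Variancesseparately}$ guarantees at least one does), and $\VV(T_n)\to\infty$ by the equivalence already proved. I would then invoke \cref{l:CLTforSumofIndependent} with $k=2$, $X_{n,1} = T_{G_n}-\EE(T_{G_n})$ and $X_{n,2}=T_{I_n}-\EE(T_{I_n})$: for the summand whose variance is bounded, \cref{c:CLTforSumofIndependent} shows $X_{n,i}/\sqrt{\VV(T_n)}\xrightarrow{\PP}0$ since $\VV(X_{n,i})/\VV(T_n)\to 0$; the other summand satisfies the CLT; hence $(T_n)_n$ does too.

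One technical wrinkle to handle carefully: in the case $\rk(G_n)\to\infty$ but where \cref{t:CLTforNonDihedral} might only apply after a further subsequence, I should note that \cref{t:CLTforNonDihedral} is stated without needing to pass to a subsequence (its own proof internally handles that via \cref{subsubsequence}), so no extra care is needed there—but if one prefers a self-contained argument, one simply pulls the subsequence-extraction up to the top level using \cref{subsubsequence} and \cref{r:subsequencewellbehaved}, which guarantees the well-behaved property is inherited. The genuinely delicate point, and the one I expect to be the main obstacle to write cleanly, is the boundary case where \emph{neither} $\VV(T_{G_n})$ nor $\VV(T_{I_n})$ individually tends to infinity along the chosen subsequence yet their sum does—this cannot actually happen once one has passed to a subsequence on which each converges in $[0,\infty]$, since a sum of two terms each converging to a finite limit converges to a finite limit, contradicting $\VV(T_n)\to\infty$; so after the subsequence reduction this case is vacuous, but it must be explicitly dispatched. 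Finally I would close the loop by remarking that $\eqref{i:CLTsatisfied}\Rightarrow\eqref{i:Variancetoinfty}$ plus the already-shown $\eqref{i:Variancetoinfty}\Leftrightarrow\eqref{i:Variancesseparately}$ completes the equivalence.
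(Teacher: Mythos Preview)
Your proposal is correct and follows essentially the same route as the paper: decompose $T_n = T_{G_n} + T_{I_n}$, use \cref{subsubsequence} (together with \cref{r:subsequencewellbehaved}) to pass to subsequences where each of $\VV(T_{G_n})$, $\VV(T_{I_n})$ converges in $[0,\infty]$, apply \cref{t:CLTforNonDihedral} or \cref{l:CLTforDihedral} to whichever piece has variance tending to infinity, and combine via \cref{l:CLTforSumofIndependent} together with \cref{c:CLTforSumofIndependent}, with the integer-valued observation handling the implication from \cref{i:CLTsatisfied} to \cref{i:Variancetoinfty}. One small note: well-behavedness plays no role for the dihedral part (\cref{l:CLTforDihedral} does not assume it), so your parenthetical there is superfluous but harmless.
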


\begin{proof}``$(2)\Leftrightarrow (3)$": By Lemma~\ref{l:sumofindependent}, the random variable $T_n$ decomposes as a sum of independent random variables $T_n = T_{G_n} + T_{I_n}$. 
	By Theorem~\ref{t:varianceofT}, $\rk(G_n)$ is of order $\VV(T_{G_n})$ and $\sum_{i=1}^{l_n} \frac{1}{m_{n,i}}$ is of order $\VV(T_{I_n})$.
	Using additivity of the variance, it follows immediately that Item~\ref{i:Variancetoinfty} is equivalent to Item~\ref{i:Variancesseparately}.
	
	\noindent
	``$ (2)\Rightarrow (1) $": That Item~\ref{i:Variancetoinfty} implies Item~\ref{i:CLTsatisfied} is the statement of Proposition~\ref{p:CLTforNonDihedral}.
	
	\noindent
	``$ (1)\Rightarrow(2) $": Lastly, as $T_n-\EE(T_n)$ takes only values in $\mathbb{Z}$, the sequence $(T_n)_n$ can only satisfy a CLT if its variance tends to infinity \cite[Proposition 6.15]{KahleStump2018}. This shows that Item~\ref{i:CLTsatisfied} implies Item~\ref{i:Variancetoinfty}.
\end{proof}

We note that Item~\ref{i:CLTsatisfied} implies Item~\ref{i:Variancetoinfty} even without assuming that the sequence is well-behaved.

\begin{example}
	\label{ex:CoxetergroupsandCLT}
	The following list of examples illustrates Theorem~\ref{t:mainresult}. To simplify the notation, we omit the rounding of the ranks of the irreducible components and write $ W^k=\prod_{i=1}^{k}W $ for the product of $ k $ copies of the group $ W $.
	\begin{itemize}
		\item[(i)] $ W_n=\prod_{i=1}^{\log(n)}\cox{A}_{\frac{n}{2^i}}\times (\cox{B}_{\sqrt{n}})^{\sqrt{n}} $ satisfies the CLT:  $ (\cox{B}_{\sqrt{n}})^{\sqrt{n}} $ satisfies \eqref{eq:maximum_condition}. We need to show that the first factor $ \prod_{i=1}^{\log(n)}\cox{A}_{\frac{n}{2^i}} $ is well-behaved.
		Note that $ m_n=\log(n) $.	We have $ \VV(T_{\cox{A}_n})=\frac{n}{6}+O(1) $, such that
		\begin{align*}
			\sum\limits_{i=k}^{m_n}\frac{\VV(T_{n,i})}{\VV(T_{M_n^f})}&=\frac{\sum\limits_{i=k}^{m_n}\VV(T_{n,i})}{\sum\limits_{i=1}^{m_n}\VV(T_{n,i})} =\frac{\sum\limits_{i=k}^{m_n}(\frac{n}{ 2^i}+O(1))}{\sum\limits_{i=1}^{m_n}(\frac{n}{2^i}+O(1))}=\frac{\sum\limits_{i=k}^{m_n}2^{-i}+o(1)}{\sum\limits_{i=1}^{m_n}2^{-i}+o(1)}.
		\end{align*}
		As the geometric series converges, $ \lim\limits_{k\to\infty}\sup\limits_{n} $ of the above goes to zero and therefore the sequence $ (W_n)_n $ is well-behaved.
		\item[(ii)]  For any $ 0<\delta<1 $, the product $ \cox{B}_{n}\times (\cox{A}_{n^{1-\delta}})^{n^{\delta}} $  satisfies the CLT: Define $f(n)\coloneqq n^{1-\delta}$. Then $ m_n=1 $ is bounded and $ (\cox{A}_{n^{1-\delta}})^{n^{\delta}} $ satisfies \eqref{eq:maximum_condition}.
		\item[(iii)] $ W_n=\prod_{i=1}^{n}I_2(i) $ satisfies the CLT, as the harmonic series diverges and therefore $ \VV(T_n)\to \infty $.
		\item[(iv)] $ W_n=\prod_{i=1}^{n}I_2(i^2) $ does not satisfy the CLT, as $ \VV(T_n) $ is bounded.
		\item[(v)] $ W_n=\cox{A}_3^n \times \cox{D}_5^{n}\times \cox{F}_4^n\times \prod_{i=1}^{n}I_2(i^2) $ satisfies the CLT, as $ m_n=0 $ and $ \VV(T_n)\to \infty $.
		Note that $ \cox{F}_4 $ is a Coxeter group of exceptional type.
	\end{itemize}
\end{example}

\begin{appendix}
	\section{Geometric interpretation of \texorpdfstring{$t$}{t}}
	\label{appendixGeometry}
	Throughout this section, let $(W,S)$ be a fixed Coxeter system and let $n\coloneqq |S|$ be its rank.
	In this section, we give an interpretation of the statistic
	\begin{align*}
		t: W & \to \mathbb{N}\\ 
		w &\mapsto  \des(w)+ \des(w^{-1}).
	\end{align*}
	in terms of a boolean complex defined by Hultman \cite{Hultman2007}. We here use the same notation as Petersen in \cite{Pet:twosidedanalogue}.
	
	Associated to $W$ is its \emph{Coxeter complex} $\Sigma=\Sigma(W,S)$, a simplicial complex which is defined as follows:
	For $I\subseteq S$, denote by $W_I$ the (parabolic) subgroup of $W$ generated by $I$. The faces of $\Sigma$ are given by all cosets $w W_I$, where $w\in W$ and $I\subseteq S$; the face relation is defined by
	\begin{align*}
		w W_I \leq_\Sigma w' W_{I'} \hspace{0.5cm}\text{ if and only if }\hspace{0.5cm}
		w W_I \supseteq w' W_{I'}.
	\end{align*}
	Coxeter complexes are classical, well-studied structures that give a geometric way of investigating properties of Coxeter groups and related structures; for further details, see e.g. \cite[Chapter 3]{AB:Buildings}.
	
	In \cite{Hultman2007}, Hultman defines a complex $\Xi=\Xi(W,S)$, which can be seen as a \emph{two-sided Coxeter complex}. The faces of $\Xi$ are given by all triples $(I, W_I w W_J, J)$, where $I, J\subseteq S$, $w\in W$ and $W_I w W_J$ denotes the corresponding double coset. The face relation is given by
	\begin{align*}
		(I, W_I w W_J, J) \leq_\Xi (I', W_{I'} w' W_{J'}, J')\hspace{0.5cm}\text{ if and only if }\hspace{0.5cm}
		\begin{cases}
			I\supseteq I',\\
			J\supseteq J' \text{ and } \\
			W_I w W_J \supseteq W_{I'} w' W_{J'}.
		\end{cases}
	\end{align*}
	Petersen \cite{Pet:twosidedanalogue} showed that $\Xi$ shares several properties with $\Sigma$: It is a balanced, shellable complex and if $W$ is finite, the geometric realisation of $\Xi$ is homeomorphic to a sphere of dimension $2n-1$.
	A difference between the two structures is that $\Xi$ is not a simplicial, but only a boolean complex.
	A \emph{boolean complex} (or \emph{simplicial poset}) is a poset $P$ with a unique minimal element $\hat{0}$ such that every lower interval $[\hat{0},p]$ is a boolean algebra, i.e. equivalent to the face poset of a simplex. Such a poset can also be seen as a semi-simplical set; its maximal faces (or \emph{facets}) are the maximal elements of $P$ and the face maps are induced by the partial order of $P$. Using this description, the vertices are the minimal elements of $P \setminus \{\hat{0}\}$. The face poset of a simplicial complex is an example of a boolean complex. The complex $\Xi$ however is not simplicial---in fact, all of its facets share the same vertex set.

	From now on, we assume that $W$, and hence $\Xi$, is finite.
	The statistic $t$ has two interpretations in terms of $\Xi$. Firstly, it describes the $h$-vector of this complex and secondly, it is related to the gallery distance on $\Xi$:
	
	\subsection{\texorpdfstring{$h$}{h}-vectors}
	The \emph{f-vector} of a non-empty finite complex $X$ of dimension $d-1$ is given by the tuple $f(X)=(f_{-1}, f_0, \ldots , f_{d-1})$, where $f_{-1}=1$ and for $i\geq 0$, $f_i$ denotes the number of $i$-faces of $X$. The \emph{h-vector} $h(X)=(h_0, \ldots , h_d)$ is defined from this by the linear relations
	\begin{align*}
		h_k \coloneqq \sum_{i=0}^k (-1)^{k-i} \binom{d-i}{k-i} f_{i-1}.
	\end{align*}
	Just like the $f$-vector, the $h$-vector encodes the number of faces of different dimensions of $X$. It has a particularly nice interpretation in the case where $X$ is partitionable (which is in particular the case for the shellable complex $\Xi$), see e.g. \cite[Proposition III.2.3]{Sta:CombinatoricsCommutativeAlgebra:}. 
	Hultman showed in \cite[Example 5.9, Theorem 5.10]{Hultman2007} that the $h$-polynomial of $\Xi$ equals the generating function of the statistic $t$, i.e. that one has
	\begin{align*}
		h(\Xi,x)= \sum_{i=0}^{d} h_i x^i = \sum_{w\in W} x^{\des(w)+\des(w^{-1})}.
	\end{align*}
	
	\subsection{Chamber complexes}
	Let $X$ be a \emph{pure} complex (i.e. all of its facets have the same dimension).
	Two facets of $X$ are called \emph{adjacent} if their intersection is a face of codimension $1$.
	The complex $X$ is called a \emph{chamber complex} if every pair of facets $\sigma,\tau\in X$ can be connected by a \emph{gallery}, i.e. a sequence of facets $\sigma=\nolinebreak\tau_0,\ldots, \tau_l=\tau$ such that for all $0\leq i \leq l$, the facets $\tau_i$ and $\tau_{i+1}$ are adjacent.
	In this setting, $l$ is called the \emph{length} of the gallery. For two facets $\sigma,\tau$ of a chamber complex $X$, the \emph{gallery distance} $d(\sigma, \tau)$ is defined as the minimal length of a gallery connecting $\sigma$ and $\tau$. Galleries of minimal length can be seen as the analogue of geodesics in the realm of chamber complexes.
	
	To see that $\Xi$ is a chamber complex, we first note that the facets of $\Xi$ are given by triples $(\emptyset, w, \emptyset)$, i.e. they are in one-to-one correspondence with the elements of $W$. Denote by $\sigma_w$ the facet corresponding to $w\in W$. Spelling out the definitions, it is easy to see that $\sigma_w$ and $\sigma_{w'}$ share a face of codimension $1$ if and only if $w'=ws$ or $w'=sw$ for some $s\in S$. Hence, the fact that $S$ generates $W$ implies that for any two facets of $\Xi$, there is a gallery connecting the two.
	
	In particular, for every $w\in W$, a gallery between the simplex $\sigma_e$ corresponding to the neutral element $e\in W$ and $\sigma_w$ corresponds to writing $w$ as a product of the elements in $S$. Furthermore, if $\sigma_e=\nolinebreak\sigma_{w_0},\ldots, \sigma_{w_l}=\sigma_w$ is a gallery of minimal length, we have
	\begin{align*}
		l_S(w_i)=i \text{ for all } 0\leq i \leq l,
	\end{align*}
	where $l_S(\cdot)$ denotes the word length with respect to $S$.
	One consequence of this is that the gallery distance $d(\sigma_e,\sigma_w)$ equals the word length $l_S(w)$. Furthermore, in such a gallery, there must be $s\in S$ such that $w_{l-1}=ws$ or $w_{l-1}=sw$ and $l_S(w_{l-1})=l_S(w)-1$.
	Noting that $s\in \on{Des}(w^{-1})$ if and only if 
	\begin{align*}
		l_S(w^{-1}s)=l_S((sw)^{-1})=l_S(sw)<l_S(w),
	\end{align*}
	we find the following, second interpretation of $t$ in terms of $\Xi$:
	\begin{observation}
		For any $w\in W$, the number of facets of $\Xi$ which are adjacent to $\sigma_w$ and lie on a gallery of minimal length between $\sigma_e$ and $\sigma_w$ is given by $t(w)=\des(w)+\des(w^{-1})$.
	\end{observation}
	In this sense, the statistic $t$ counts the number of geodesics starting at facets in $\Xi$.

	\section{Higher moments of \texorpdfstring{$T$}{T}}
	This section contains the higher moments of the random variables which were described in the proofs of Proposition~\ref{p:fourthmomentA} and Proposition~\ref{p:fourthmomentB}.
	
	Let $D_n=D_{W_n}$, $T_n=T_{W_n}$, let $D_n'$ be the random variable associated to the statistic
	\begin{align*}
		W_n &\to \mathbb{N}\\
		w &\mapsto \des(w^{-1})
	\end{align*}
	and define $U_n:=D_n-\EE (D_n) $ and $ U'_n:=D'_n-\EE (D'_n)$. 
	
	For the proofs of Proposition~\ref{p:fourthmomentA} and Proposition~\ref{p:fourthmomentB}, one needs to compute inductively $\EE(U_n^k {U_n'}^l)$ for all $0\leq k,l \leq 4$ where $W_n=\cox{A}_n$ and $W_n=\cox{B}_n$, respectively. Note that $\EE(U_n^k {U_n'}^l)=\EE(U_n^l {U_n'}^k)$.
	For the sake of completeness, we also list the mixed moments of $(D_n, D'_n)$, which can be computed similarly, although they are not needed to prove Proposition~\ref{p:fourthmomentA} and Proposition~\ref{p:fourthmomentB}.
	
	\subsection{Type \texorpdfstring{$\cox{A}$}{A}}
	\label{app:recursionmomentsA}
	For $W_n=\cox{A}_n$ we display the list of (joint) moments up to degree 4 in Table~\ref{tableA}. The result for $ \EE(U_n^4) $ corresponds to Lemma~\ref{l:fmA} and the result for $ \EE((T_n-\EE(T_n))^4) $ to Proposition~\ref{p:fourthmomentA}. The moments in boldface were already known before and can be found in \cite{KahleStump2018}.
	
	\begin{table}
		\renewcommand{\arraystretch}{1.5}
		\[\begin{array}[0.5cm]{c|c}
			& \EE(\cdot) \\ \hline
			\mathbf{U_n}	& 0\\
			\mathbf{U_n^2} 	& \frac{n+2}{12}\\
			\mathbf{U_nU_n'}& \frac{n}{2 (n+1)} \\
			U_n^3 			& 0\\
			U_n^2U'_n 		& 0 \\
			U_n^3U'_n		&\frac{n (n+2)}{8 (n+1)} \\
			U_n^4 			& \frac{1}{240} (n+2) (5 n+8)\\
			U_n^2{U'_n}^2	&\frac{1}{144} \left(n^2+4 n+76\right)-\frac{2 n+1}{3n(n+1)}\\
			\mathbf{(T_n-\EE(T_n))^2}&\frac{n+2}{6}+\frac{n}{n+1}\\
			(T_n-\EE(T_n))^3&0\\
			(T_n-\EE(T_n))^4& \frac{1}{60} \left(5 n^2+79 n+258\right)-\frac{5 n+2}{n(n+1)}\\
			\mathbf{D_n	} 	& \frac{n}{2}\\
			\mathbf{D_n^2} 	& \frac{n+2}{12}+\frac{n^2}{4}\\
			\mathbf{D_nD_n'}& \frac{n^2}{4}+\frac{n}{2n+2} \\
			D_n^3 			& \frac{n(n^2+n+2)}{8}\\
			D_n^2D'_n 		& \frac{1}{24}(3 n^3+n^2+14 n-12 ) +\frac{1}{2 (n+1)} \\
			D_n^3D'_n		& \frac{1}{16}(n^4-4 n^3+15 n^2-36 n+56 ) -\frac{4}{ (n+1)} \\
			D_n^4 			& \frac{1}{240}(15 n^4+30 n^3+65 n^2+18 n+16)\\
			D_n^2{D'_n}^2	&\frac{1}{144}(9 n^4+6 n^3+85 n^2-68 n+148) -\frac{7 n+2}{6 n(n+1)}\\
			\mathbf{T_n^2}	& n^2+\frac{n+2}{6}+\frac{n}{n+1}\\
			T_n^3			& n^3+\frac{n^2}{2}+4 n-3+\frac{3}{n+1}\\
			T_n^4			&n^4+n^3+\frac{97 n^2}{12}-\frac{281 n}{60}+\frac{103}{10}-\frac{11 n+2}{n(n+1)}
		\end{array}\]
	\caption{List of moments for type $ \cox{A}_n $.}\label{tableA}
	\end{table}
	
	\subsection{Type \texorpdfstring{$\cox{B}$}{B}}
	\label{app:recursionmomentsB}
	For $W_n=\cox{B}_n$ we display the list of (joint) moments up to degree 4 in Table~\ref{tableB}. The result for $ \EE(U_n^4) $ corresponds to Lemma~\ref{l:fmB} and the result for $ \EE((T_n-\EE(T_n))^4) $ to Proposition~\ref{p:fourthmomentB}. The moments in boldface  were already known before and can be found in \cite{KahleStump2018}.
	
	\begin{table}
		\renewcommand{\arraystretch}{1.5}
		\[\begin{array}[0.5cm]{c|c}
			& \EE(\cdot) \\ \hline
			\mathbf{U_n}	& 0\\
			\mathbf{U_n^2} 	& \frac{n+1}{12}\\
			\mathbf{U_nU_n'}& \frac{1}{4} \\
			U_n^3 			& 0\\
			U_n^2U'_n 		& 0 \\
			U_n^3U'_n		& \frac{n+1}{16} \\
			U_n^4 			& \frac{1}{240} (n+1) (5 n+3)\\
			U_n^2{U'_n}^2	&\frac{1}{144} \left(n^2+2 n+19\right)+\frac{2 n-1}{24 n(n-1)}\\
			\mathbf{(T_n-\EE(T_n))^2}&\frac{n+4}{6}\\
			(T_n-\EE(T_n))^3&0\\
			(T_n-\EE(T_n))^4& \frac{1}{60} \left(5 n^2+39 n+79\right)+\frac{2 n-1}{4n(n-1)}\\
			\mathbf{D_n	} 	& \frac{n}{2}\\
			\mathbf{D_n^2} 	& \frac{n+1}{12}+\frac{n^2}{4}\\
			\mathbf{D_nD_n'}& \frac{n^2+1}{4} \\
			D_n^3 			& \frac{n(n^2+n+1)}{8}\\
			D_n^2D'_n 		& \frac{1}{24}  n (7 + n + 3 n^2)\\
			D_n^3D'_n		&\frac{1}{16} (1 + n + 4 n^2 + n^3 + n^4) \\
			D_n^4 			& \frac{1}{240} (15 n^4+30 n^3+35 n^2+8 n+3)\\
			D_n^2{D'_n}^2	&\frac{1}{144}(9 n^4+6 n^3+43 n^2+2 n+19) +\frac{2 n-1}{24 n(n-1)}\\
			\mathbf{T_n^2}& n^2+\frac{n+4}{6}\\
			T_n^3			& n(n^2+\frac{n}{2}+2)\\
			T_n^4			& n^4+n^3+\frac{49 n^2}{12}+\frac{13 n}{20}+\frac{79}{60} +\frac{2 n-1}{4 n(n-1)}
		\end{array}\]
	\caption{List of moments for type $ \cox{B}_n $.}\label{tableB}
	\end{table}
	\clearpage
\end{appendix}

\bibliographystyle{plain}
\bibliography{bibliography}

\end{document}